\def\rr{{\mathbb R}}
\def\rn{{\mathbb{R}^n}}
\def\zz{{\mathbb Z}}
\def\nn{{\mathbb N}}
\def\cp{{\mathcal P}}
\def\cs{{\mathcal S}}
\def\fz{\infty }
\def\az{\alpha}
\def\gz{\gamma}
\def\lz{\lambda}
\def\lf{\left}
\def\r{\right}
\def\hs{\hspace{0.25cm}}
\def\ls{\lesssim}
\def\noz{\nonumber}
\def\supp{\mathop\mathrm{\,supp\,}}
\def\esup{\mathop\mathrm{\,ess\,sup\,}}
\def\B{\mathfrak{B}}
\def\vh{{H_A^{p(\cdot)}(\rn)}}
\def\vah{{H_A^{p(\cdot),r,s}(\rn)}}
\def\lfz{{L^{\fz}(\rn)}}
\def\lv{{L^{p(\cdot)}(\rn)}}
\newtheorem{theorem}{Theorem}[section]
\newtheorem{lemma}[theorem]{Lemma}
\theoremstyle{definition}
\newtheorem{remark}[theorem]{Remark}
\newtheorem{definition}[theorem]{Definition}
\renewcommand{\appendix}{\par
   \setcounter{section}{0}%
   \setcounter{subsection}{0}%
   \setcounter{subsubsection}{0}%
   \gdef\thesection{\@Alph\c@section}%
   \gdef\thesubsection{\@Alph\c@section.\@arabic\c@subsection}%
   \gdef\theHsection{\@Alph\c@section.}%
   \gdef\theHsubsection{\@Alph\c@section.\@arabic\c@subsection}%
   \csname appendixmore\endcsname
 }
\numberwithin{equation}{section}
\begin{document}

\arraycolsep=1pt

\title{\bf\Large Fourier Transform of Variable Anisotropic Hardy Spaces
with Applications to Hardy--Littlewood Inequalities
\footnotetext{\hspace{-0.35cm} 2010 {\it
Mathematics Subject Classification}. Primary 42B35;
Secondary 42B30, 42B10, 46E30.
\endgraf {\it Key words and phrases.}
expansive matrix, (variable) Hardy space, Fourier transform,
Hardy--Littlewood inequality.
\endgraf The author is supported by the Fundamental Research Funds for the Central Universities
(Grant No.~2020QN21), the Natural Science Foundation of Jiangsu Province
(Grant No.~BK20200647)
and the National Natural Science Foundation of
China (Grant No.~12001527).}}
\author{Jun Liu}
\date{}
\maketitle

\vspace{-0.8cm}

\begin{center}
\begin{minipage}{13cm}
{\small {\bf Abstract}\quad
Let $p(\cdot):\ \mathbb{R}^n\to(0,1]$ be a variable exponent
function satisfying the globally log-H\"{o}lder continuous condition
and $A$ a general expansive matrix on $\mathbb{R}^n$.
Let $H_A^{p(\cdot)}(\mathbb{R}^n)$ be the variable anisotropic
Hardy space associated with $A$ defined via the radial maximal function.
In this article, via the known atomic characterization of
$H_{A}^{p(\cdot)}(\mathbb{R}^n)$ and establishing two useful estimates
on anisotropic variable atoms, the author shows that the Fourier transform
$\widehat{f}$ of $f\in H_A^{p(\cdot)}(\mathbb{R}^n)$ coincides with a
continuous function $F$ in the sense of tempered distributions,
and $F$ satisfies a pointwise inequality which contains a step function
with respect to $A$
as well as the Hardy space norm of $f$. As applications, the author
also obtains a higher order convergence of the continuous function $F$ at
the origin. Finally, an analogue of the
Hardy--Littlewood inequality in the variable anisotropic Hardy space setting
is also presented. All these
results are new even in the classical isotropic setting.}
\end{minipage}
\end{center}

\vspace{0.2cm}

\section{Introduction\label{s1}}

The main purpose of this article is to investigate the Fourier transform
on the variable anisotropic Hardy space $\vh$ from \cite{lwyy18},
where $p(\cdot):\ \rn\to(0,1]$
is a variable exponent function satisfying the so-called globally log-H\"{o}lder continuous condition [see \eqref{2e4} and \eqref{2e5} below] and $A$ a general
expansive matrix on $\rn$ (see Definition \ref{2d1} below).
As we all know, the problem of the Fourier transform on classical Hardy spaces $H^p({\mathbb{R}^n})$ originates from Fefferman and Stein \cite{fs72},
which is a very interesting and hot topic in the real-variable theory of
$H^p({\mathbb{R}^n})$. First, using entire functions of exponential type,
Coifman \cite{c74} characterized the Fourier transform $\widehat{f}$ of
$f\in H^p({\mathbb R})$ (namely, for the dimension $n=1$).
For the study of the Fourier transform on Hardy spaces in the higher dimensions,
we refer the reader to \cite{bw13,c82,gk01,tw80} and their references.

In particular, the following well-known result was obtained by
Taibleson and Weiss \cite{tw80}: for each fixed $p\in(0,1]$,
the Fourier transform $\widehat{f}$ of
$f$ which belongs to $H^p({\mathbb{R}^n})$ coincides with a continuous
function $F$ in the sense of tempered distributions and, for each
$\xi\in{\mathbb{R}^n}$,
\begin{align}\label{1e1}
\left|F(\xi)\right|\le C\|f\|_{H^p({\mathbb{R}^n})}|\xi|^{n(1/p-1)},
\end{align}
where $C$ is a positive constant depending only on $n$ and $p$. Moreover,
the inequality \eqref{1e1} further implies the famous Hardy--Littlewood
inequality for Hardy spaces, namely, for each given $p\in(0,1]$, there
exists a positive constant $R$ such that, for any $f\in H^p({\mathbb{R}^n})$,
\begin{align}\label{1e2}
\left[\int_{{\mathbb{R}^n}}|\xi|^{n(p-2)}
\left|F(\xi)\right|^p\,d\xi\right]^{1/p}\le R\|f\|_{H^p({\mathbb{R}^n})},
\end{align}
where $F$ is as in \eqref{1e1}; see, for instance, \cite[p.\,128]{s93}.
In addition, via the known atomic characterization of the anisotropic
Hardy space $H^p_A({\mathbb{R}^n})$,
Bownik and Wang \cite{bw13} proved that both inequalities \eqref{1e1} and
\eqref{1e2} hold true for the Hardy space $H^p_A({\mathbb{R}^n})$.
Very recently, these results were extended to the setting of
Hardy spaces associated with ball
quasi-Banach function spaces or
the anisotropic mixed-norm Hardy space ${H_{\vec{a}}^{\vec{p}}(\rn)}$, where
$$\vec{a}:=(a_1,\ldots,a_n)\in [1,\infty)^n\quad\mathrm{and}\quad\vec{p}:=(p_1,\ldots,p_n)\in (0,1]^n$$
are two vectors; see \cite{hcy,hcy21}.

On another hand, as a generalization of the classical Hardy space $H^p(\rn)$,
the variable Hardy space $H^{p(\cdot)}(\rn)$, in which the constant exponent
$p$ is replaced by a variable
exponent function $p(\cdot):\ \rn\to(0,\fz]$, was first studied by Nakai and Sawano
\cite{ns12} and, independently, by Cruz-Uribe and Wang \cite{cw14} with some weaker assumptions on $p(\cdot)$ than those used in \cite{ns12}.
For more development about this Hardy space and other function spaces with variable exponents, we refer the reader to
\cite{ah10,abr16,cf13,dhhr11,dhr09,jzz17,kv14,s10,xu08,yyyz16,yzn16,zsy16,zyy18}.
In addition, the anisotropic Hardy space $H_A^p(\rn)$, with $p\in(0,\fz)$,
was first investigated by Bownik \cite{mb03}, which have proved important for the
study of discrete groups of dilations in wavelet theory,
and also includes both the classical
Hardy space and the parabolic Hardy space of Calder\'{o}n and Torchinsky \cite{ct75}
as special cases. Based on these work,
recently, Liu et al. \cite{lwyy18} introduced the variable anisotropic Hardy space
$\vh$ with respect to the expansive matrix $A$, and established
its various real-variable characterizations.
Nowadays, this anisotropic setting has proved useful not only
in developing function spaces arising in harmonic analysis,
but also in many other branches such as the wavelet theory
(see, for instance, \cite{bb12,mb03,dhp09}) and partial differential equations
(see, for instance, \cite{bw19,jm06}).

Motivated by the real-variable theory of the variable anisotropic Hardy space
$\vh$ from \cite{lwyy18}
and the aforementioned results about the characterizations
of the Fourier transform on
classical Hardy spaces $H^p({\mathbb{R}^n})$ and anisotropic Hardy spaces $H^p_A({\mathbb{R}^n})$ as well as anisotropic mixed-norm Hardy spaces ${H_{\vec{a}}^{\vec{p}}(\rn)}$, in this article, we first extend the inequality
\eqref{1e1} to the setting of variable anisotropic Hardy spaces and
then also give out some applications of our main result.

To be precise, in Section \ref{s2}, we recall the notions of expansive matrices,
variable Lebesgue spaces $\lv$ and variable anisotropic Hardy spaces
(see, Definitions \ref{2d1} and \ref{2d4} below).

The target of Section \ref{s3} is to obtain the main result,
namely, Theorem \ref{3t1} below. For this purpose, we first
establish two uniform pointwise estimates on anisotropic variable atoms
(see Lemmas \ref{3l1} and \ref{3l2} below) as well as an auxiliary inequality
(see Lemma \ref{3l5} below).
Using these and the known atomic characterization of $\vh$ from
\cite[Theorem 4.8]{lwyy18}, we then
show that the Fourier transform $\widehat{f}$
of $f$ which belongs to $\vh$ coincides with a continuous
function $F$ in the sense of tempered distributions. We also prove
that this continuous function $F$, multiplied by a step function with
respect to $A$, can be pointwisely controlled by a positive constant multiple
of the Hardy space norm of $f$.
This elucidates the necessity of vanishing
moments of anisotropic variable atoms in some sense [see Remark \ref{3r1}(ii)
below].

In Section \ref{s4}, as applications, applying a technical
inequality obtained in the proof of Theorem \ref{3t1},
we first show a higher order convergence
of the continuous function $F$ at the origin (see Theorem \ref{4t1} below).
Then we prove that the function $F$,
multiplied by some power of a step function with
respect to $A$, is $p_+$-integrable,
and this integral can be controlled by a positive constant multiple
of the Hardy space norm of $f$ (see Theorem \ref{4t2} below).
This result is a generalization of the Hardy--Littlewood inequality
for the present setting of variable anisotropic Hardy spaces.

Finally, we make some conventions on notation.
We always let $\nn:=\{1,2,\ldots\}$, $\zz_+:=\{0\}\cup\nn$
and $\mathbf{0}$ be the \emph{origin} of $\rn$. For each fixed multi-index
$\az:=(\az_1,\ldots,\az_n)\in(\zz_+)^n=:\zz_+^n$,
let $|\az|:=\az_1+\cdots+\az_n$ and
$\partial^{\az}
:=(\frac{\partial}{\partial x_1})^{\az_1}\cdots(\frac{\partial}{\partial x_n})^{\az_n}.$
We use $C$ to denote a positive constant
which is independent of the main parameters, but may vary in different setting.
The \emph{symbol} $g\ls h$ means $g\le Ch$ and,
if $g\ls h\ls g$, then we write $g\sim h$. If $f\le Ch$ and $h=g$ or $h\le g$,
we then write $f\ls h\sim g$ or $f\ls h\ls g$, \emph{rather than} $f\ls h=g$
or $f\ls h\le g$. In addition,
for any set $E\subset\rn$, we denote by $\mathbf{1}_E$ its \emph{characteristic function},
by $E^\complement$ the
set $\rn\setminus E$ and by $|E|$ its \emph{n-dimensional Lebesgue measure}.
For any $d\in\mathbb{R}$, we denote by $\lfloor d\rfloor$
the \emph{largest integer not greater than $d$}.

\section{Preliminaries \label{s2}}

In this section, we recall the notions of expansive matrices
and variable anisotropic Hardy spaces (see, for instance, \cite{mb03,lwyy18}).

The following definition of expansive matrices is from \cite{mb03}.

\begin{definition}\label{2d1}
A real $n\times n$ matrix $A$ is called an \emph{expansive matrix}
(shortly, a \emph{dilation}) if
$$\min_{\lz\in\sigma(A)}|\lz|>1,$$
here and thereafter, $\sigma(A)$ denotes the \emph{set of all eigenvalues of $A$}.
\end{definition}

By \cite[p.\,5, Lemma 2.2]{mb03}, we know that there exist an open
ellipsoid $\Delta$, with $|\Delta|=1$, and $r\in(1,\infty)$ such that
$\Delta\subset r\Delta\subset A\Delta$. Thus,
for any $i\in\zz$, $B_i:=A^i\Delta$ is open,
$B_i\subset rB_i\subset B_{i+1}$ and $|B_i|=b^i$ with $b:=|\det A|$.
For any $x\in\rn$ and $i\in\zz$, an ellipsoid $x+B_i$
is called a \emph{dilated ball}. Let $\mathfrak{B}$ be the set of all such
dilated balls, namely,
\begin{align}\label{2e1}
\mathfrak{B}:=\lf\{x+B_i:\ x\in\rn,\ i\in\zz\r\}
\end{align}
and let $\tau:=\inf\{k\in\zz:\ r^k\ge2\}.$

In \cite[p.\,6, Definition 2.3]{mb03},
the following homogeneous quasi-norm was introduced.

\begin{definition}\label{2d2}
Given a dilation $A$, a measurable mapping $\rho:\ \rn \to [0,\infty)$
is called a \emph{homogeneous quasi-norm}, with respect to $A$, if
\begin{enumerate}
\item[\rm{(i)}] $x\neq\mathbf{0}$ implies $\rho(x)\in(0,\fz)$;

\item[\rm{(ii)}] for any $x\in\rn$, $\rho(Ax)=b\rho(x)$;

\item[\rm{(iii)}] there exists a constant $C\in[1,\fz)$
such that, for any $x$, $y\in\rn$, $\rho(x+y)\le C[\rho(x)+\rho(y)]$.
\end{enumerate}
\end{definition}

For any given dilation $A$, in \cite[p.\,6, Lemma 2.4]{mb03},
it was proved that all homogeneous
quasi-norms with respect to $A$ are equivalent.
Therefore, once $A$ is fixed, we can use the
\emph{step homogeneous quasi-norm} $\rho$ defined by setting, for any $x\in\rn$,
\begin{equation*}
\rho(x):=\left\{
\begin{array}{cl}
b^i&\hspace{0.6cm} {\rm when}\hspace{0.5cm} x\in
B_{i+1}\backslash B_i,\\
0&\hspace{0.6cm} {\rm when}\hspace{0.5cm} x=\mathbf{0}
\end{array}\r.
\end{equation*}
for convenience.

Recall also that an infinitely differentiable function $\phi$ is called a
\emph{Schwartz function} if, for any $k\in\zz_+$ and multi-index $\gz\in\zz_+^n$,
$$\|\phi\|_{\gz,k}:=
\sup_{x\in\rn}[\rho(x)]^k\lf|\partial^\gz\phi(x)\r|<\infty.$$
Let $\cs(\rn)$ be the set of all Schwartz functions as above, equipped with the topology
determined by $\{\|\cdot\|_{\alpha,\ell}\}_{\az\in\zz_+^n,\,\ell\in\zz_+}$, and
$\cs'(\rn)$ its \emph{dual space}, equipped with the weak-$\ast$
topology.
Throughout this article, for any $\phi\in\cs(\rn)$ and $i\in\zz$, let
$\phi_i(\cdot):=b^{i}\phi(A^{i}\cdot)$.

For any measurable function $p(\cdot):\ \rn\to(0,\fz]$, let
\begin{align}\label{2e3}
p_-:=\mathop\mathrm{ess\,inf}_{x\in \rn}p(x),\hspace{0.35cm}
p_+:=\mathop\mathrm{ess\,sup}_{x\in \rn}p(x)\hspace{0.35cm}
{\rm and}\hspace{0.35cm}\underline{p}:=\min\{p_-,1\}.
\end{align}
Denote by $\cp(\rn)$ the \emph{set of all measurable functions}
$p(\cdot)$ satisfying $0<p_-\le p_+<\fz$.

Given a function $p(\cdot)\in\cp(\rn)$,
the \emph{modular functional} $\varrho_{p(\cdot)}$ and
the \emph{Luxemburg--Nakano quasi-norm} $\|f\|_{\lv}$,
with respect to $p(\cdot)$, are defined, respectively, by setting,
for any measurable function $f$,
$$\varrho_{p(\cdot)}(f):=\int_\rn|f(x)|^{p(x)}\,dx
\quad{\rm and}\quad
\|f\|_{\lv}:=\inf\lf\{\lz\in(0,\fz):\ \varrho_{p(\cdot)}(f/\lz)\le1\r\}.$$
Furthermore, the \emph{variable Lebesgue space} $\lv$ is defined to be the
set of all measurable functions $f$ such that $\varrho_{p(\cdot)}(f)<\fz$,
equipped with the quasi-norm $\|f\|_{\lv}$.

Let $C^{\log}(\rn)$ be the set of all $p(\cdot)\in\cp(\rn)$ satisfying the
\emph{globally log-H\"older continuous condition}, which means
there exist two positive constants $C_{\log}(p)$ and $C_\fz$, and
$p_\fz\in\rr$ such that, for any $x,\ y\in\rn$,
\begin{equation}\label{2e4}
|p(x)-p(y)|\le \frac{C_{\log}(p)}{\log(e+1/\rho(x-y))}
\end{equation}
and
\begin{equation}\label{2e5}
|p(x)-p_\fz|\le \frac{C_\fz}{\log(e+\rho(x))}.
\end{equation}

\begin{definition}\label{2d3}
Let $\phi\in\cs(\rn)$ satisfy $\int_\rn\phi(x)\,dx\neq0$. The
\emph{radial maximal function} $M_\phi(f)$ of $f\in\cs'(\rn)$,
with respect to $\phi$, is defined by setting, for any $x\in\rn$,
\begin{align*}
M_\phi(f)(x):= \sup_{i\in\zz}|f\ast\phi_i(x)|.
\end{align*}
\end{definition}

Applying \cite[Definition 2.4 and Theorem 3.10]{lwyy18}, we now give
a equivalent definition of variable anisotropic Hardy spaces
as follows.

\begin{definition}\label{2d4}
Let $p(\cdot)\in C^{\log}(\rn)$ and $\phi$ be as in Definition \ref{2d3}.
The \emph{variable anisotropic Hardy space} $\vh$ is defined by setting
\begin{equation*}
\vh:=\lf\{f\in\cs'(\rn):\ M_\phi(f)\in\lv\r\}
\end{equation*}
and, for any $f\in\vh$, let
$\|f\|_{\vh}:=\| M_\phi(f)\|_{\lv}$.
\end{definition}

\section{Fourier transforms of $\vh$\label{s3}}

In this section, we study the Fourier transform $\widehat{f}$,
where the distribution $f$ comes from the variable anisotropic Hardy space $\vh$.

Recall that, for any $\phi\in\cs(\rn)$, its \emph{Fourier transform},
denoted by $\mathfrak{F}\phi$ or $\widehat{\phi}$, is defined by setting,
for any $v\in\rn$,
$$\mathfrak{F}\phi(v)=\widehat\phi(v)
:=\int_{\rn}\phi(x)e^{-2\pi\imath x\cdot v}\,dx.
$$
here and thereafter, $\imath:=\sqrt{-1}$ and $x\cdot v := \sum_{k=1}^n x_k v_k$
for any $x:=(x_1,\ldots,x_n)$, $v:=(v_1,\ldots,v_n)\in\rn$.
Moreover, for any $f\in\cs'(\rn)$, its \emph{Fourier transform},
also denoted by $\mathfrak{F}f$ or $\widehat{f}$, is defined by setting,
for any $\phi\in\cs(\rn)$,
$$\langle\mathfrak{F}f,\phi\rangle=\langle\widehat f,\phi\rangle
:=\langle f,\widehat{\phi}\rangle.$$

We now present the main result of this article as follows:
the Fourier transform $\widehat{f}$ of $f\in\vh$ coincides with a
continuous function $F$ in the sense of tempered distributions,
and $F$ satisfies a pointwise inequality.

\begin{theorem}\label{3t1}
Let $p(\cdot)\in C^{\log}(\rn)$ satisfy $0<p_-\le p_+\le1$,
where $p_-,\ p_+$ are as in \eqref{2e3}. Then, for any
$f\in\vh$, there exists a continuous function $F$ on $\rn$ such that
$$\widehat{f}=F\quad in\quad \cs'(\rn),$$
and there exists a positive constant $C$, depending only on
$A$, $p_-$ and $p_+$, such that, for any $x\in\rn$,
\begin{align}\label{3e6}
|F(x)|\le C\|f\|_{\vh}\max\lf\{[\rho_\ast(x)]^{\frac1{p_-}-1},
[\rho_\ast(x)]^{\frac1{p_+}-1}\r\},
\end{align}
here and thereafter, $\rho_\ast$ is as in Definition \ref{2d2} with $A$ replaced by its
adjoint matrix $A^*$.
\end{theorem}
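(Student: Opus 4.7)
The plan is to start from the atomic decomposition of $\vh$ recalled from \cite[Theorem 4.8]{lwyy18}: any $f\in\vh$ may be written as $f=\sum_{j}\lz_j a_j$ in $\cs'(\rn)$, where each $a_j$ is an anisotropic $(p(\cdot),r,s)$-atom supported in some dilated ball $x_j+B_{i_j}\in\mathfrak{B}$ and the $\{\lz_j\}$ carry a control by $\|f\|_{\vh}$ in the variable sequence norm. Since Fourier transforming is continuous on $\cs'(\rn)$, one has $\widehat{f}=\sum_j\lz_j\widehat{a_j}$ in $\cs'(\rn)$, so everything reduces to proving that the scalar series $F(\xi):=\sum_j\lz_j\widehat{a_j}(\xi)$ converges absolutely and locally uniformly on $\rn$ and is bounded pointwise by the right-hand side of \eqref{3e6}.

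The core of the argument is to produce two uniform pointwise estimates for $\widehat{a_j}(\xi)$, indexed by the scale $i_j$ of the support ball and by $\rho_*(\xi)$; these are exactly the content of the pre-announced Lemmas \ref{3l1} and \ref{3l2}. For the first one, I bound $|\widehat{a_j}(\xi)|$ by $\|a_j\|_{L^1(\rn)}$, and use H\"older's inequality together with the size condition of atoms to write $\|a_j\|_{L^1(\rn)}\ls |B_{i_j}|/\|\mathbf{1}_{x_j+B_{i_j}}\|_{\lv}$; this estimate is effective when $\rho_*(\xi)$ is large compared with $b^{-i_j}$. For the second one, I exploit the vanishing moments up to order $s$: expanding $e^{-2\pi\imath y\cdot\xi}$ around $y=x_j$ and using that $\int a_j(y)(y-x_j)^{\az}\,dy=0$ for $|\az|\le s$, then controlling the Taylor remainder in terms of $\sup_{y\in x_j+B_{i_j}}|(A^{i_j})^{\top}\xi|^{s+1}\sim (b^{i_j}\rho_*(\xi))^{s+1}$ (after a change of variables and using that $\rho_*$ is the step quasi-norm associated with $A^*$); this gives a bound of the form $\|a_j\|_{L^1(\rn)}(b^{i_j}\rho_*(\xi))^{s+1}$, effective when $\rho_*(\xi)$ is small compared with $b^{-i_j}$.

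Given these two atomic estimates, I will split $\sum_j\lz_j\widehat{a_j}(\xi)$, for each fixed $\xi\ne\mathbf{0}$, according to whether $b^{i_j}\rho_*(\xi)\le1$ or $>1$ and apply the appropriate bound on each piece. Inside each piece, the atoms are organized by scale; the auxiliary Lemma \ref{3l5} will be used to convert the resulting sums of $\lz_j$ against powers of $|B_{i_j}|/\|\mathbf{1}_{x_j+B_{i_j}}\|_{\lv}$ into a multiple of $\|f\|_{\vh}$. Choosing $s$ and $r$ large enough (as allowed in the definition of the atomic space) and summing the two geometric series in $b^{i_j}\rho_*(\xi)$ will produce respective factors of $[\rho_*(\xi)]^{1/p_+-1}$ and $[\rho_*(\xi)]^{1/p_--1}$, hence the maximum appearing in \eqref{3e6}. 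Absolute and locally uniform convergence then also follows, which gives continuity of $F$ and the identity $\widehat{f}=F$ in $\cs'(\rn)$.

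The main obstacle, and where variable exponents genuinely complicate the classical Taibleson--Weiss/Bownik--Wang scheme, is precisely the passage from the atomic-level bounds to the global pointwise bound for $F$: in the constant-exponent case one collects a single geometric factor $b^{i_j(1/p-1)}$ per atom, whereas here the weight $|B_{i_j}|/\|\mathbf{1}_{x_j+B_{i_j}}\|_{\lv}$ is neither scale-homogeneous nor translation-invariant, which is why the two exponents $1/p_-$ and $1/p_+$ have to enter separately through the max. Managing this in a uniform way is what Lemma \ref{3l5} (the auxiliary inequality) must achieve, and arranging the two atomic bounds so that they fit together with the log-H\"older hypothesis \eqref{2e4}--\eqref{2e5} is the principal technical point of the proof.
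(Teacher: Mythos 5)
Your overall architecture is the same as the paper's: atomic decomposition via \cite[Theorem 4.8]{lwyy18}, termwise Fourier transform in $\mathcal{S}'(\mathbb{R}^n)$, two pointwise estimates per atom (an $L^1$-type bound and a vanishing-moment/Taylor bound), the auxiliary Lemma \ref{3l5}, and then continuity plus identification of $F$ with $\widehat{f}$. However, the step where you combine the atomic estimates into the global bound is described in a way that would not work as stated. You propose, for fixed $\xi$, to split the atoms according to $b^{i_j}\rho_*(\xi)\le 1$ or $>1$ and then ``sum two geometric series in $b^{i_j}\rho_*(\xi)$'' with the help of Lemma \ref{3l5}. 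There is no geometric series over the atom index: the scales $i_j$ in an atomic decomposition are arbitrary, unordered, and may repeat with arbitrary coefficients, so nothing forces a geometric structure in $j$; and Lemma \ref{3l5} does not convert weighted sums of $|\lambda_j|$ against factors like $|B_{i_j}|/\|\mathbf{1}_{x_j+B_{i_j}}\|_{L^{p(\cdot)}(\mathbb{R}^n)}$ into $\|f\|_{H_A^{p(\cdot)}(\mathbb{R}^n)}$ --- it only gives $\sum_j|\lambda_j|\le$ the variable sequence norm. The correct mechanism, which is what the paper does in Lemma \ref{3l2}, is to make the bound uniform over atoms \emph{before} any summation: for a single atom supported in $x_0+B_{k_0}$ one combines the size condition with the elementary modular estimate $\|\mathbf{1}_{x_0+B_{k_0}}\|_{L^{p(\cdot)}(\mathbb{R}^n)}^{-1}\le\max\{b^{-k_0/p_-},b^{-k_0/p_+}\}$ (this is where the two exponents enter; you allude to it but never say how to obtain it), and then the case split $\rho_*(x)\le b^{-k_0}$ versus $\rho_*(x)>b^{-k_0}$ absorbs all remaining powers of $b^{k_0}$ into $[\rho_*(x)]^{1/p_--1}$ or $[\rho_*(x)]^{1/p_+-1}$, using only that $1-\frac1{p_-}+(s+1)\frac{\ln\lambda_-}{\ln b}>0$. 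In particular there is no need to ``choose $s$ and $r$ large'': the minimal $s$ in \eqref{3e2} is exactly what is needed, and $r$ only enters through H\"older on the support of the atom. After this uniform bound, the plain inequality $\sum_j|\lambda_j|\lesssim\|f\|_{H_A^{p(\cdot)}(\mathbb{R}^n)}$ from Lemma \ref{3l5} finishes \eqref{3e6}.

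Two smaller points. First, local uniform convergence of $\sum_j\lambda_j\widehat{a_j}$ gives continuity of $F$ but does not by itself yield $F=\widehat{f}$ in $\mathcal{S}'(\mathbb{R}^n)$, since the partial sums must also be controlled at infinity; the paper closes this by pairing each $\widehat{a_j}$ against a Schwartz function, using the uniform growth bound $|\widehat{a_j}(x)|\lesssim\max\{[\rho_*(x)]^{1/p_--1},[\rho_*(x)]^{1/p_+-1}\}$ to get a bound independent of $j$, and then letting $\sum_{j>K}|\lambda_j|\to0$. With your estimates this is routine, but it should be said. Second, the log-H\"older hypothesis is not needed to ``arrange the two atomic bounds'': it enters only through the atomic characterization of $H_A^{p(\cdot)}(\mathbb{R}^n)$; the per-atom analysis uses nothing beyond $0<p_-\le p_+\le1$.
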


To prove this theorem, we need some notions and technical lemmas.
First, for any $r\in(0,\fz]$ and measurable set $E\subset\rn$, the Lebesgue
space $L^r(E)$ is defined to be the set of all measurable functions $f$ such that,
when $r\in(0,\fz)$,
$$\|f\|_{L^r(E)}:=\lf[\int_E|f(x)|^r\,dx\r]^{1/r}<\fz$$
and
$$\|f\|_{L^\fz(E)}:=\esup_{x\in E}|f(x)|<\fz.$$

In addition, the dilation operator $D_A$ is defined by setting, for any measurable
function $f$ on $\rn$,
$$D_A(f)(\cdot):=f(A\cdot).$$
Moreover, we have the following identity: for any $k\in\zz$, $f\in L^1(\rn)$
and $x\in\rn$,
$$\widehat{f}(x)=b^k\lf(D_{A^*}^k\mathfrak{F}D_{A}^kf\r)(x).$$

The succeeding notions of both anisotropic $(p(\cdot),r,s)$-atoms and
variable anisotropic atomic Hardy spaces $\vah$ are from \cite{lwyy18}.

\begin{definition}\label{3d3}
\begin{enumerate}
\item[{\rm (i)}]
Let $p(\cdot)\in\cp(\rn)$, $r\in(1,\fz]$,
\begin{align}\label{3e2}
s\in\lf[\lf\lfloor\lf(\dfrac1{p_-}-1\right)
\dfrac{\ln b}{\ln\lz_-}\right\rfloor,\fz\right)\cap\zz_+,
\end{align}
where $p_-$ is as in \eqref{2e3}.
A measurable function $a$ on $\rn$ is called an \emph{anisotropic $(p(\cdot),r,s)$-atom}
(shortly, a $(p(\cdot),r,s)$-\emph{atom}) if
\begin{enumerate}
\item[{\rm (i)$_1$}] $\supp a \subset B$, where
$B\in\B$ with $\B$ as in \eqref{2e1};

\item[{\rm (i)$_2$}] $\|a\|_{L^r(\rn)}\le \frac{|B|^{1/r}}{\|\mathbf{1}_B\|_{\lv}}$;

\item[{\rm (i)$_3$}] $\int_{\rn}a(x)x^\gz\,dx=0$ for any $\gz\in\zz_+^n$
with $|\gz|\le s$.
\end{enumerate}
\item[{\rm (ii)}]
Let $p(\cdot)\in C^{\log}(\rn)$, $r\in(1,\fz]$ and $s$ be as in \eqref{3e2}.
The \emph{variable anisotropic atomic Hardy space} $\vah$ is defined to be the
set of all $f\in\cs'(\rn)$ satisfying that there exist a sequence
$\{\lz_i\}_{i\in\nn}\subset\mathbb{C}$ and a sequence of $(p(\cdot),r,s)$-atoms,
$\{a_i\}_{i\in\nn}$, supported, respectively, in
$\{B^{(i)}\}_{i\in\nn}\subset\B$ such that
\begin{align*}
f=\sum_{i\in\nn}\lz_ia_i
\quad\mathrm{in}\quad\cs'(\rn).
\end{align*}
Moreover, for any $f\in\vah$, let
\begin{align*}
\|f\|_{\vah}:=
{\inf}\lf\|\lf\{\sum_{i\in\nn}
\lf[\frac{|\lz_i|\mathbf{1}_{B^{(i)}}}{\|\mathbf{1}_{B^{(i)}}\|_{\lv}}\r]^
{\underline{p}}\r\}^{1/\underline{p}}\r\|_{\lv},
\end{align*}
where the infimum is taken over all the decompositions of $f$ as above.
\end{enumerate}
\end{definition}

Applying the vanishing moment of atoms and the Taylor remainder theorem,
we obtain the following uniform estimate for atoms.

\begin{lemma}\label{3l1}
Let $p(\cdot)$, $r$ and $s$ be as in Definition \ref{3d3}(ii).
Assume that $a$ is a $(p(\cdot),r,s)$-atom supported in $x_0+B_{k_0}$
with some $x_0\in\rn$ and $k_0\in\zz$. Then there exists a positive
constant $C$, depending only on $A$ and $s$, such that,
for any $\alpha\in\zz_+^n$ with $|\az|\leq s$ and $x\in\rn$,
\begin{align}\label{3e1}
\lf|\partial^\az\lf(\mathfrak{F}D_{A}^{k_0}a\r)(x)\r|
\leq Cb^{-k_0/r}\|a\|_{L^r(\rn)}\min\lf\{1,|x|^{s-|\az|+1}\r\}.
\end{align}
\end{lemma}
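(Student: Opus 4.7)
The plan is to realize $\partial^\alpha \mathfrak{F}(D_A^{k_0}a)$ as an explicit Fourier integral, transfer the vanishing moments from $a$ to $D_A^{k_0}a$, and then invoke a Taylor expansion of the exponential kernel controlled by those vanishing moments. A direct change of variable $z = A^{k_0}y$ simultaneously furnishes the dilation identity $\|D_A^{k_0}a\|_{L^r(\rn)}=b^{-k_0/r}\|a\|_{L^r(\rn)}$ and the transfer of vanishing moments: for any $\gamma\in\zz_+^n$ with $|\gamma|\le s$,
\begin{align*}
\int_{\rn} D_A^{k_0}a(y)\,y^\gamma\,dy = b^{-k_0}\int_{\rn} a(z)(A^{-k_0}z)^\gamma\,dz = 0,
\end{align*}
since $(A^{-k_0}z)^\gamma$ is a polynomial in $z$ of total degree $|\gamma|\le s$. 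Differentiating under the integral sign then gives
\begin{align*}
\partial^\alpha \mathfrak{F}(D_A^{k_0}a)(x) = (-2\pi\imath)^{|\alpha|}\int_{\rn} D_A^{k_0}a(y)\,y^\alpha\, e^{-2\pi\imath x\cdot y}\,dy,
\end{align*}
which is the common starting point for both branches of the minimum in \eqref{3e1}.

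For the ``$|x|^{s-|\alpha|+1}$'' branch I would Taylor-expand $e^{-2\pi\imath x\cdot y}$ at $y=0$ up to order $s-|\alpha|$; once multiplied by $y^\alpha$, the Taylor polynomial becomes a polynomial in $y$ of total degree at most $s$, which is annihilated by the vanishing moments of $D_A^{k_0}a$. Only the Taylor remainder survives, and its standard estimate $|R_{s-|\alpha|}(x,y)|\le (2\pi|x||y|)^{s-|\alpha|+1}/(s-|\alpha|+1)!$ produces exactly the factor $|x|^{s-|\alpha|+1}$, paired with a weighted integral of $|D_A^{k_0}a(y)|\,|y|^{s+1}$. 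For the ``$1$'' branch I take absolute values directly, producing the weighted integral of $|D_A^{k_0}a(y)|\,|y|^{|\alpha|}$. Both branches thereby reduce to controlling weighted $L^1$-type moments of $|D_A^{k_0}a|$ of order at most $s+1$.

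The main obstacle I anticipate is the uniform control of these weighted integrals $\int |D_A^{k_0}a(y)|\,|y|^{N}\,dy$ for $N\in\{|\alpha|,s+1\}$: because the support of $D_A^{k_0}a$ is a translate $A^{-k_0}x_0+\Delta$ rather than $\Delta$ itself, the factor $|y|^N$ is not automatically bounded by a constant depending only on $A$ and $s$. I plan to handle this via the modulation identity $\mathfrak{F}(D_A^{k_0}a)(x)=e^{-2\pi\imath x\cdot A^{-k_0}x_0}\widehat g(x)$, where $g(w):=a(x_0+A^{k_0}w)$ is supported in $\Delta$ and inherits both the $L^r$ bound and the full moment cancellation, so that the Taylor-remainder analysis can be carried out cleanly on $\widehat g$ (where $|w|^{N}$ is manifestly bounded on $\Delta$) and then transferred back to $\partial^\alpha \mathfrak{F}(D_A^{k_0}a)$ via the Leibniz rule; after this reduction, H\"older's inequality on the unit-volume set $\Delta$ together with the dilation identity for $\|g\|_{L^r(\rn)}$ produces the constant depending only on $A$ and $s$ claimed in \eqref{3e1}.
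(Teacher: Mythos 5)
Your core computation is the same as the paper's: transfer the $L^r$-bound and the vanishing moments to $D_A^{k_0}a$ by the change of variables, Taylor-expand $e^{-2\pi\imath x\cdot y}$ to degree $s-|\az|$ so that, after multiplication by $y^\az$, the polynomial part is annihilated by the moment conditions, and estimate the remainder branch and the trivial branch by H\"older on a set of measure comparable to $1$. Where you diverge is the treatment of the center $x_0$: the paper simply declares ``without loss of generality $x_0=\mathbf{0}$'', while you keep $x_0$ general, correctly note that $|y|^{N}$ is not uniformly bounded on the translated support $A^{-k_0}x_0+B_0$, and propose to repair this by writing $\mathfrak{F}(D_A^{k_0}a)(x)=e^{-2\pi\imath x\cdot A^{-k_0}x_0}\,\widehat{g}(x)$ with $g:=a(x_0+A^{k_0}\cdot)$ supported in $B_0$, proving the estimates for $\widehat{g}$, and transferring them back by the Leibniz rule.

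That last transfer is a genuine gap when $|\az|\ge 1$: the Leibniz terms in which derivatives fall on the modulation contribute factors $(2\pi)^{|\az-\bz|}|(A^{-k_0}x_0)^{\az-\bz}|$, which are not bounded by any constant depending only on $A$ and $s$, so your argument only yields \eqref{3e1} with a constant growing with $|A^{-k_0}x_0|$. Moreover this cannot be repaired by a cleverer transfer: $\partial^{\az}(\mathfrak{F}D_A^{k_0}a)$ is not translation-invariant in $a$, and keeping the leading Leibniz term $(-2\pi\imath A^{-k_0}x_0)^{\az}\widehat{g}(x)$ at a point $|x|\sim 1$ where $\widehat{g}$ has full size shows that a bound uniform in $x_0$ actually fails for $|\az|\ge 1$. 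In other words, \eqref{3e1} should be understood for atoms centered at the origin; the paper's reduction ``WLOG $x_0=\mathbf{0}$'' is itself literally justified only for $\az=\mathbf{0}$ (by the very unimodular factor you wrote down), and this is harmless because only the case $\az=\mathbf{0}$ of Lemma \ref{3l1} is used in the proof of Lemma \ref{3l2} and beyond. For $\az=\mathbf{0}$ your modulation identity has modulus one, no Leibniz rule is needed, and your proof is complete and in substance identical to the paper's; for $|\az|\ge1$ you should either assume $x_0=\mathbf{0}$ (equivalently, state the bound for $\widehat{g}$), or accept a constant depending on $A^{-k_0}x_0$.
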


\begin{proof}
Without loss of generality, we may assume that $a$ is supported in $B_{k_0}$,
namely, $x_0=\mathbf{0}$. This implies that $\supp(D_{A}^{k_0}a)\subset B_0$.
Then, for any $\alpha\in\zz_+^n$ with $|\az|\leq s$ and $x\in\rn$, we have
$$\partial^\az\lf(\mathfrak{F}D_{A}^{k_0}a\r)(x)=\int_{B_0}
(-2\pi\imath\xi)^\az\lf(D_{A}^{k_0}a\r)(\xi)e^{-2\pi\imath \xi\cdot x}\,d\xi.$$
Let $P$ be the degree $s-|\az|$ Taylor expansion polynomial of the function
$\xi\mapsto e^{-2\pi\imath \xi\cdot x}$ centered at the origin $\mathbf{0}$.
By the vanishing moment of $a$, the Taylor remainder theorem and the H\"{o}lder
inequality, we find that
\begin{align}\label{3e5}
\lf|\partial^\az\lf(\mathfrak{F}D_{A}^{k_0}a\r)(x)\r|
&=\lf|\int_{B_0}(-2\pi\imath\xi)^\az
\lf(D_{A}^{k_0}a\r)(\xi)e^{-2\pi\imath \xi\cdot x}\,d\xi\r|\\
&=\lf|\int_{B_0}(-2\pi\imath\xi)^\az
\lf(D_{A}^{k_0}a\r)(\xi)\lf[e^{-2\pi\imath \xi\cdot x}-P(\xi)\r]\,d\xi\r|\noz\\
&\ls\int_{B_0}|\xi^\az|
\lf|a\lf(A^{k_0}\xi\r)\r||x|^{s-|\az|+1}|\xi|^{s-|\az|+1}\,d\xi\noz\\
&\ls|x|^{s-|\az|+1}\int_{B_0}
\lf|a\lf(A^{k_0}\xi\r)\r||\xi|^{s+1}\,d\xi
\ls|x|^{s-|\az|+1}b^{-k_0}\int_{B_{k_0}}|a(\xi)|\,d\xi\noz\\
&\ls|x|^{s-|\az|+1}b^{-k_0/r}\|a\|_{L^r(\rn)}.\noz
\end{align}
To obtain the other estimate, using the H\"{o}lder inequality directly,
we have
\begin{align*}
\lf|\partial^\az\lf(\mathfrak{F}D_{A}^{k_0}a\r)(x)\r|
&=\lf|\int_{B_0}(-2\pi\imath\xi)^\az
\lf(D_{A}^{k_0}a\r)(\xi)e^{-2\pi\imath \xi\cdot x}\,d\xi\r|\\
&\ls\int_{B_0}
|\xi|^{|\az|}\lf|a\lf(A^{k_0}\xi\r)\r|\,d\xi
\ls b^{-k_0}\int_{B_{k_0}}|a(\xi)|\,d\xi\\
&\ls b^{-k_0/r}\|a\|_{L^r(\rn)},
\end{align*}
which, combined with \eqref{3e5}, completes the proof of \eqref{3e1}
and hence of Lemma \ref{3l1}.
\end{proof}

From Lemma \ref{3l1}, we deduce a uniform estimate on the Fourier
transform of atoms, which is later used to prove Theorem \ref{3t1}.

\begin{lemma}\label{3l2}
Let $p(\cdot)\in C^{\log}(\rn)$ with $p_+\in(0,1]$, $r$ and $s$ be as
in Definition \ref{3d3}(ii). Then there exists a positive constant $C$
such that, for any $(p(\cdot),r,s)$-atom and $x\in\rn$,
\begin{align}\label{3e3}
\lf|\widehat{a}(x)\r|\leq C\max\lf\{\lf[\rho_*(x)\r]^{\frac1{p_-}-1},\,
\lf[\rho_*(x)\r]^{\frac1{p_+}-1}\r\},
\end{align}
where $\rho_*$ is the homogeneous quasi-norm with respect to $A^*$ and $p_-$, $p_+$
are as in \eqref{2e3}.
\end{lemma}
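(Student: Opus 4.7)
The plan is to apply Lemma~\ref{3l1} at the zeroth order, combine it with the atomic size condition and a lower bound on the Luxemburg quasi-norm of $\mathbf{1}_{x_0+B_{k_0}}$, and then run a case split according to the sign of $k_0$ and the location of $x$ relative to the dual scale $b^{-k_0}$. A minor preliminary point is that $|\widehat{a}|$ is invariant under a translation of $a$, so one may as well work with the support of $a$ absorbed into $x_0+B_{k_0}$ while keeping $\|\mathbf{1}_{x_0+B_{k_0}}\|_{\lv}$ explicit.

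First, from the identity $\widehat{a}(x)=b^{k_0}(\mathfrak{F}D_A^{k_0}a)((A^*)^{k_0}x)$, Lemma~\ref{3l1} with $\az=\mathbf{0}$, and the size condition $\|a\|_{L^r(\rn)}\le b^{k_0/r}/\|\mathbf{1}_{x_0+B_{k_0}}\|_{\lv}$, I obtain
$$\lf|\widehat{a}(x)\r|\ls\frac{b^{k_0}}{\|\mathbf{1}_{x_0+B_{k_0}}\|_{\lv}}\min\lf\{1,\,\lf|(A^*)^{k_0}x\r|^{s+1}\r\}.$$
Next, I would invoke Lemma~\ref{3l5} (the standard comparison of the Luxemburg quasi-norm against the extremal exponents $p_\pm$) to deduce that $\|\mathbf{1}_{x_0+B_{k_0}}\|_{\lv}\gtrsim\min\{b^{k_0/p_-},b^{k_0/p_+}\}$, so that the prefactor $b^{k_0}/\|\mathbf{1}_{x_0+B_{k_0}}\|_{\lv}$ is dominated by $b^{-k_0(1/p_- -1)}$ when $k_0\le 0$ and by $b^{-k_0(1/p_+ -1)}$ when $k_0>0$.

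The heart of the proof is then a four-way case split. Set $\eta:=(s+1)\ln\lz_-/\ln b$, which by \eqref{3e2} satisfies $\eta>1/p_- -1\ge 1/p_+ -1\ge 0$, and note the scaling identity $\rho_*((A^*)^{k_0}x)=b^{k_0}\rho_*(x)$. If $\rho_*(x)\ge b^{-k_0}$, I pick the branch $\min=1$; since $1/p_\pm-1\ge 0$ and $\rho_*(x)\ge b^{-k_0}$, directly $b^{-k_0(1/p_\pm-1)}\le[\rho_*(x)]^{1/p_\pm-1}$. If instead $\rho_*(x)<b^{-k_0}$, then $(A^*)^{k_0}x$ lies in the regime $\rho_*\le 1$ for $A^*$, where the classical comparison $|y|\ls[\rho_*(y)]^{\ln\lz_-/\ln b}$ yields $|(A^*)^{k_0}x|^{s+1}\ls[b^{k_0}\rho_*(x)]^{\eta}$, whence
$$b^{-k_0(1/p_\pm-1)}\lf[b^{k_0}\rho_*(x)\r]^{\eta}=[\rho_*(x)]^{1/p_\pm-1}\cdot\lf[b^{k_0}\rho_*(x)\r]^{\eta-(1/p_\pm-1)}\le[\rho_*(x)]^{1/p_\pm-1},$$
where the last inequality uses $b^{k_0}\rho_*(x)<1$ together with $\eta-(1/p_\pm-1)>0$. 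Pairing the subcase $k_0\le 0$ with $p_-$ and the subcase $k_0>0$ with $p_+$, each branch produces one of the two terms inside the max in \eqref{3e3}.

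The principal obstacle is the exponent bookkeeping: one must check that the sign of $k_0$ selects which of $p_-,p_+$ controls the size bound, that the position of $x$ selects which branch of the $\min$ is active, and that the inequality $\eta>1/p_- -1$ forced by \eqref{3e2} is exactly what allows the polynomial branch to absorb the prefactor in the regime $\rho_*(x)<b^{-k_0}$. Once these alignments are set up, everything reduces to routine arithmetic on powers of $b$ and $\rho_*$.
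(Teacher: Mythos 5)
Your proof is correct and follows essentially the same route as the paper: Lemma \ref{3l1} at order zero, the size condition, the lower bound $\|\mathbf{1}_{x_0+B_{k_0}}\|_{L^{p(\cdot)}(\mathbb{R}^n)}\gtrsim\min\{b^{k_0/p_-},b^{k_0/p_+}\}$, the norm comparison of Lemma \ref{3l3} applied to $A^*$, and a case split according to whether $\rho_*(x)$ is above or below $b^{-k_0}$ (your extra split on the sign of $k_0$ merely unpacks the paper's $\max\{\cdot,\cdot\}$). One small correction: the characteristic-function estimate you need is not Lemma \ref{3l5} (that lemma is the $\sum_i|\lambda_i|$ summability bound); it is the standard modular computation for $\|\mathbf{1}_B\|_{L^{p(\cdot)}(\mathbb{R}^n)}$, which the paper likewise uses without citation in passing to the second line of \eqref{3e4}, so this is a misattribution rather than a gap.
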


To show Lemma \ref{3l2}, we need the following inequalities,
which are just \cite[p.\,11, Lemma 3.2]{mb03}.

\begin{lemma}\label{3l3}
Let $A$ be some fixed dilation. Then there exists a positive constant $C$,
depending only on $A$, such that, for any $x\in\rn$,
\begin{equation*}
\frac{1}{C} [\rho(x)]^{\ln \lambda_-/\ln b} \leq \left|x\right|
\leq C[\rho(x)]^{\ln \lambda_+/\ln b}\qquad {\rm when}\ \rho(x)\in(1,\fz),
\end{equation*}
and
\begin{equation*}
\frac{1}{C} [\rho(x)]^{\ln \lambda_+/\ln b} \leq \left|x\right|
\leq C[\rho(x)]^{\ln \lambda_-/\ln b}\qquad {\rm when}\ \rho(x)\in[0,1].
\end{equation*}
\end{lemma}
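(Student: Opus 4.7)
The plan is to reduce everything to sharp exponential estimates for iterates of $A$ and then unpack the definition of $\rho$. I would first record the linear-algebraic fact that, with $\lambda_-,\lambda_+$ chosen so that $1<\lambda_-<|\mu|<\lambda_+$ for every $\mu\in\sigma(A)$ (permissible because $A$ is expansive), the Jordan canonical form of $A$ produces constants $c_1,c_2>0$ depending only on $A$ such that, for every integer $k\ge 0$ and every $y\in\rn$,
\begin{align*}
c_1\lambda_-^k|y|\le\lf|A^k y\r|\le c_2\lambda_+^k|y|.
\end{align*}
The polynomial-in-$k$ factors coming from higher-dimensional Jordan blocks get absorbed into the strict gaps $\lambda_\pm\ne|\mu|$. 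Substituting $y\mapsto A^{-k}y$ converts this sandwich into its negative-power counterpart for $k\le 0$, so in total one has matching two-sided exponential bounds for all $k\in\zz$.

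Next I would split into the two regimes of the lemma. When $\rho(x)\in(1,\fz)$, the definition of the step quasi-norm forces $\rho(x)=b^i$ for a unique integer $i\ge 1$ with $x\in B_{i+1}\setminus B_i$. Writing $x=A^{i+1}y$ for some $y\in\Delta$ (so $|y|\ls 1$ since $\Delta$ is bounded), the upper sandwich estimate yields $|x|\ls\lambda_+^{i+1}\ls\lambda_+^i=[\rho(x)]^{\ln\lambda_+/\ln b}$, where the last equality is just the exponent identity $\lambda_+^i=b^{i\ln\lambda_+/\ln b}$. The complementary condition $A^{-i}x\notin\Delta$ forces $|A^{-i}x|\gtrsim 1$ because the origin is interior to $\Delta$, so applying the lower sandwich estimate to $A^i(A^{-i}x)$ produces the matching lower bound $|x|\gtrsim\lambda_-^i=[\rho(x)]^{\ln\lambda_-/\ln b}$.

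When $\rho(x)\in[0,1]$, the case $\rho(x)=0$ is trivial and otherwise $\rho(x)=b^{-j}$ with integer $j\ge 0$ and $x\in B_{1-j}\setminus B_{-j}$. The negative-power part of the sandwich applied to $x=A^{1-j}y$ with $y\in\Delta$ gives $|x|\ls\lambda_-^{-j}=[\rho(x)]^{\ln\lambda_-/\ln b}$ from above, and $A^j x\notin\Delta$ combined with the positive-power upper bound $|A^j x|\ls\lambda_+^j|x|$ yields $|x|\gtrsim\lambda_+^{-j}=[\rho(x)]^{\ln\lambda_+/\ln b}$ from below. The four inequalities together cover the statement.

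The principal obstacle is the very first step, namely establishing the polynomial-free exponential sandwich $c_1\lambda_-^k|y|\le|A^k y|\le c_2\lambda_+^k|y|$. The cleanest route is to equip $\rn$ with an adapted inner product built from the spectral/Jordan decomposition of $A$, so that $A$ behaves like an approximate similarity with eigenvalue moduli trapped in $[\lambda_-,\lambda_+]$ and the sandwich holds with constants depending only on the change-of-basis map; returning to the Euclidean norm modifies only the implicit constants and leaves the exponents in the statement unchanged. Once this is in place, the remainder is a routine exponent calculation with $b^i$ and $\lambda_\pm^i$.
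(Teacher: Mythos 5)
Your argument is correct: the exponential sandwich $c_1\lambda_-^k|y|\le|A^ky|\le c_2\lambda_+^k|y|$ for $k\ge 0$ (and its counterpart for $k\le 0$), combined with unpacking the step quasi-norm via $x\in B_{i+1}\setminus B_i$, is precisely how this estimate is established in the source the paper cites. Note that the paper itself offers no proof here --- it simply quotes the result as \cite[p.\,11, Lemma 3.2]{mb03} --- so your reconstruction, including the reduction of the sandwich to an adapted norm built from the Jordan form (or, equivalently, to the spectral radius formula), matches the standard argument of Bownik's memoir.
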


We now give the proof of Lemma \ref{3l2}.

\begin{proof}[Proof of Lemma \ref{3l2}]
Let $a$ be a $(p(\cdot),r,s)$-atom supported in $x_0+B_{k_0}$ with some $x_0\in\rn$
and $k_0\in\zz$. Then, from \eqref{3e1} with $\az=(\overbrace{0,\ldots,0}^{n\ \rm times})$,
it follows that, for any $x\in\rn$,
\begin{align*}
\lf|\widehat{a}(x)\r|
&=\lf|b^{k_0}\lf(D_{A^*}^{k_0}\mathfrak{F}D_{A}^{k_0}a\r)(x)\r|\\
&=\lf|b^{k_0}\lf(\mathfrak{F}D_{A}^{k_0}a\r)\lf((A^*)^{k_0}x\r)\r|\\
&\ls b^{k_0}b^{-k_0/r}\|a\|_{L^r(\rn)}\min\lf\{1,\lf|(A^*)^{k_0}x\r|^{s+1}\r\},
\end{align*}
which, together with the size condition of $a$, implies that
\begin{align}\label{3e4}
\lf|\widehat{a}(x)\r|
&\ls b^{k_0}\lf\|\mathbf{1}_{B_{k_0}}\r\|_{\lv}^{-1}
\min\lf\{1,\lf|(A^*)^{k_0}x\r|^{s+1}\r\}\\
&\ls b^{k_0}\max\lf\{b^{-\frac{k_0}{p_-}},\,b^{-\frac{k_0}{p_+}}\r\}
\min\lf\{1,\lf|(A^*)^{k_0}x\r|^{s+1}\r\}.\noz
\end{align}
To obtain \eqref{3e3}, we next consider two cases:
$\rho_*(x)\leq b^{-k_0}$ and $\rho_*(x)>b^{-k_0}$.

\emph{Case 1).} $\rho_*(x)\leq b^{-k_0}$. In this case, note that
$\rho_*((A^*)^{k_0}x)\leq 1$. By \eqref{3e4}, Lemma \ref{3l3} and the fact that
$$1-\frac1{p_+}+(s+1)\frac{\ln\lambda_-}{\ln b}
\geq1-\frac1{p_-}+(s+1)\frac{\ln\lambda_-}{\ln b}>0$$
[see \eqref{2e3} and \eqref{3e2}], we conclude that, for any $x\in\rn$
satisfying $\rho_*(x)\leq b^{-k_0}$,
\begin{align}\label{3e12}
\lf|\widehat{a}(x)\r|
&\ls b^{k_0}\max\lf\{b^{-\frac{k_0}{p_-}},\,b^{-\frac{k_0}{p_+}}\r\}
\lf[\rho_*\lf((A^*)^{k_0}x\r)\r]^{(s+1)\frac{\ln\lambda_-}{\ln b}}\\
&\sim\max\lf\{b^{k_0[1-\frac1{p_-}+(s+1)\frac{\ln\lambda_-}{\ln b}]},\,
b^{k_0[1-\frac1{p_+}+(s+1)\frac{\ln\lambda_-}{\ln b}]}\r\}
\lf[\rho_*(x)\r]^{(s+1)\frac{\ln\lambda_-}{\ln b}}\noz\\
&\ls\max\lf\{\lf[\rho_*(x)\r]^{\frac1{p_-}-1},\,
\lf[\rho_*(x)\r]^{\frac1{p_+}-1}\r\}.\noz
\end{align}
This proves \eqref{3e3} for Case 1).

\emph{Case 2).} $\rho_*(x)>b^{-k_0}$. In this case, note that $\rho_*((A^*)^{k_0}x)>1$.
By \eqref{3e4}, Lemma \ref{3l3} again and the fact that
$$\frac1{p_-}-1\geq\frac1{p_+}-1\geq0,$$
we know that, for any $x\in\rn$ satisfying $\rho_*(x)>b^{-k_0}$,
\begin{align*}
\lf|\widehat{a}(x)\r|
&\ls b^{k_0}\max\lf\{b^{-\frac{k_0}{p_-}},\,b^{-\frac{k_0}{p_+}}\r\}\\
&\sim \max\lf\{b^{(1-\frac1{p_-})k_0},\,b^{(1-\frac1{p_+})k_0}\r\}\\
&\ls\max\lf\{\lf[\rho_*(x)\r]^{\frac1{p_-}-1},\,
\lf[\rho_*(x)\r]^{\frac1{p_+}-1}\r\}.
\end{align*}
This finishes the proof of \eqref{3e3} for Case 2) and hence of Lemma \ref{3l2}.
\end{proof}

Via borrowing some ideas from the proof of \cite[Lemma 5.9]{zsy16},
we obtain the following technical lemma.

\begin{lemma}\label{3l5}
Let $p(\cdot)\in\cp(\rn)$. Then, for any $\{\lz_i\}_{i\in\nn}\subset\mathbb{C}$
and $\{B^{(i)}\}_{i\in\nn}\subset\mathfrak{B}$,
$$\sum_{i\in\nn}|\lz_i|\le \lf\|\lf\{\sum_{i\in\nn}
\lf[\frac{|\lz_i|\mathbf{1}_{B^{(i)}}}{\|\mathbf{1}_{B^{(i)}}\|_{\lv}}\r]^
{\underline{p}}\r\}^{1/{\underline{p}}}\r\|_{\lv},$$
where $\underline{p}$ is as in \eqref{2e3}.
\end{lemma}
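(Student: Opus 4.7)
The plan is to reduce the desired quasi-norm inequality to a modular-level estimate. Setting $\lz := \sum_{i \in \nn} |\lz_i|$ (which may be assumed finite, else there is nothing to prove) and letting $g$ denote the function inside the $\lv$-quasi-norm on the right-hand side, the bound $\sum_{i} |\lz_i| \le \|g\|_{\lv}$ will follow once one shows $\varrho_{p(\cdot)}(g/\lz) \ge 1$, since by the definition of the Luxemburg--Nakano quasi-norm and the strict decrease of $\mu \mapsto \varrho_{p(\cdot)}(g/\mu)$ on $(0,\infty)$ (for $g \not\equiv 0$), such a modular estimate forces $\|g\|_{\lv} \ge \lz$.

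The next step is a pointwise lower bound on $g(x)^{p(x)}$. Since $\underline{p} = \min\{p_-, 1\} \le p(x)$ for a.e.\ $x$, one has $p(x)/\underline{p} \ge 1$, and the elementary inequality $(\sum_i a_i)^\beta \ge \sum_i a_i^\beta$, valid for $a_i \ge 0$ and $\beta \ge 1$, gives
\begin{align*}
g(x)^{p(x)} & = \Bigl\{\sum_{i \in \nn} \Bigl[\tfrac{|\lz_i|}{\|\mathbf{1}_{B^{(i)}}\|_{\lv}}\Bigr]^{\underline{p}} \mathbf{1}_{B^{(i)}}(x)\Bigr\}^{p(x)/\underline{p}}\\
& \ge \sum_{i \in \nn} \Bigl[\tfrac{|\lz_i|}{\|\mathbf{1}_{B^{(i)}}\|_{\lv}}\Bigr]^{p(x)} \mathbf{1}_{B^{(i)}}(x).
\end{align*}
Dividing through by $\lz^{p(x)}$ and integrating reduces the task to proving that $\sum_i \int_{B^{(i)}} (|\lz_i|/\lz)^{p(x)} \|\mathbf{1}_{B^{(i)}}\|_{\lv}^{-p(x)} \,dx \ge 1$. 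With $t_i := |\lz_i|/\lz \in [0, 1]$, the bound $p_+ \le 1$ (implicit in the applications, cf.\ Theorem \ref{3t1}) yields $t_i^{p(x)} \ge t_i$ pointwise, and the standard identity $\varrho_{p(\cdot)}(\mathbf{1}_{B^{(i)}}/\|\mathbf{1}_{B^{(i)}}\|_{\lv}) = 1$ makes the remaining integral equal to $1$. Summing then gives $\varrho_{p(\cdot)}(g/\lz) \ge \sum_i t_i = 1$, as required.

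The main obstacle is to reconcile two ``concavity-type'' moves that pull in opposite directions: pointwise, the exponent $p(x)$ must be \emph{larger} than $\underline{p}$ in order to obtain $(\sum_i a_i)^{p(x)/\underline{p}} \ge \sum_i a_i^{p(x)/\underline{p}}$, while at the coefficient level one needs $p(x)$ \emph{no bigger} than $1$ so that $t_i^{p(x)} \ge t_i$. These fit together precisely when $p(x) \le 1$ a.e.\ (in which case $\underline{p} = p_-$), the structural setting in which the variable anisotropic Hardy space theory is developed here. A subsidiary but standard point is the identity $\varrho_{p(\cdot)}(\mathbf{1}_E/\|\mathbf{1}_E\|_{\lv}) = 1$ used above, which follows from the continuity and strict monotonicity of $\mu \mapsto \varrho_{p(\cdot)}(\mathbf{1}_E/\mu)$ on $(0,\infty)$ whenever $p_+ < \infty$ and $|E| < \infty$.
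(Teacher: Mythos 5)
Your argument is correct in the regime where the paper actually uses this lemma, but it takes a genuinely different route. The paper stays at the quasi-norm level: after dividing by $\lambda:=\sum_{i\in\nn}|\lambda_i|$ it uses the pointwise embedding $\sum_i a_i\le\{\sum_i a_i^{\underline{p}}\}^{1/\underline{p}}$ to drop from the $\ell^{\underline{p}}$-bracket to the plain sum $\sum_i\frac{|\lambda_i|\mathbf{1}_{B^{(i)}}}{\lambda\|\mathbf{1}_{B^{(i)}}\|_{\lv}}$, and then invokes, without justification, the superadditivity $\|\sum_i f_i\|_{\lv}\ge\sum_i\|f_i\|_{\lv}$ for nonnegative $f_i$ (a reverse Minkowski inequality for $\lv$). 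You instead unfold the Luxemburg--Nakano norm and verify the modular bound $\varrho_{p(\cdot)}(g/\lambda)\ge1$ directly, via $(\sum_i a_i)^{p(x)/\underline{p}}\ge\sum_i a_i^{p(x)/\underline{p}}$, the concavity bound $t_i^{p(x)}\ge t_i$ for $t_i\in[0,1]$, and the identity $\varrho_{p(\cdot)}(\mathbf{1}_{B}/\|\mathbf{1}_{B}\|_{\lv})=1$. Your version is longer but self-contained: in effect it proves exactly the superadditivity step that the paper asserts, and it makes explicit where the restriction on $p(\cdot)$ enters.

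On that restriction: your step $t_i^{p(x)}\ge t_i$ does require $p(x)\le 1$ a.e., which is not among the hypotheses of Lemma \ref{3l5} as stated, but it is assumed in every application (Theorems \ref{3t1}, \ref{4t1} and \ref{4t2}), and some such assumption is in fact unavoidable: for $p(\cdot)\equiv2$ (so $\underline{p}=1$), $\lambda_i=1$ for $i=1,\dots,N$ and $N$ pairwise disjoint dilated balls of measure one, the right-hand side equals $\sqrt{N}$ while the left-hand side equals $N$, so the stated inequality fails; the paper's superadditivity display needs $p(x)\le1$ (concavity of $t\mapsto t^{p(x)}$) for the same reason. So flagging $p_+\le1$ is a clarification rather than a defect relative to the paper. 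Two small touch-ups to your write-up: the case $\sum_i|\lambda_i|=\infty$ is not vacuous--deduce it from the finite case by applying your bound to partial sums and noting that the right-hand side only increases when terms are added; and in the reduction step you can bypass the appeal to strict monotonicity of the modular by observing that if some $\mu<\lambda$ satisfied $\varrho_{p(\cdot)}(g/\mu)\le1$, then $\varrho_{p(\cdot)}(g/\lambda)\le(\mu/\lambda)^{p_-}\varrho_{p(\cdot)}(g/\mu)<1$, contradicting your modular lower bound.
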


\begin{proof}
Let $\lz:=\sum_{i\in\nn}|\lz_i|$. Note that,
for any $\{\lz_i\}_{i\in\nn}\subset\mathbb{C}$ and $t\in(0,1]$,
$$\lf(\sum_{i\in\nn}|\lz_i|\r)^{t}\le \sum_{i\in\nn}|\lz_i|^{t}.$$
By the fact that $\underline{p}\in(0,1]$ [see \eqref{2e3}], we find that
\begin{align*}
\lf\|\lf\{\sum_{i\in\nn}
\lf[\frac{|\lz_i|\mathbf{1}_{B^{(i)}}}{\lz\|\mathbf{1}_{B^{(i)}}\|_{\lv}}\r]^
{\underline{p}}\r\}^{1/{\underline{p}}}\r\|_{\lv}
&\ge\lf\|\sum_{i\in\nn}
\frac{|\lz_i|\mathbf{1}_{B^{(i)}}}{\lz\|\mathbf{1}_{B^{(i)}}\|_{\lv}}\r\|_{\lv}\\
&\ge\sum_{i\in\nn}\frac{|\lz_i|}{\lz}\lf\|
\frac{\mathbf{1}_{B^{(i)}}}{\|\mathbf{1}_{B^{(i)}}\|_{\lv}}\r\|_{\lv}=1,
\end{align*}
which implies the desired conclusion and hence completes the proof of Lemma \ref{3l5}.
\end{proof}

To prove Theorem \ref{3t1}, we also need the following atomic characterizations
of $\vh$ established in \cite[Theorem 4.8]{lwyy18}.

\begin{lemma}\label{3l4}
Let $p(\cdot)\in C^{\log}(\rn)$, $r\in(\max\{p_+,1\},\fz]$ with $p_+$
as in \eqref{2e3}, $s$ be as in \eqref{3e2} and
$N\in\mathbb{N}\cap[\lfloor(\frac1{\underline{p}}-1)\frac{\ln b}{\ln
\lambda_-}\rfloor+2,\fz)$ with $\underline{p}$ as in \eqref{2e3}.
Then $\vh=\vah$ with equivalent quasi-norms.
\end{lemma}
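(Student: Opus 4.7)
The plan is to combine the atomic characterization of $\vh$ from Lemma \ref{3l4} with the uniform pointwise estimate on atoms in Lemma \ref{3l2}. Given $f\in\vh$, I would fix $r$ and $s$ as in Lemma \ref{3l4} and choose a near-optimal atomic decomposition $f=\sum_{i\in\nn}\lz_i a_i$ in $\cs'(\rn)$, where each $a_i$ is a $(p(\cdot),r,s)$-atom supported in $B^{(i)}\in\mathfrak{B}$ and the atomic quasi-norm of $\{\lz_i,B^{(i)}\}_{i\in\nn}$ is at most $2\|f\|_{\vah}\sim\|f\|_{\vh}$. Then set
$$F(x):=\sum_{i\in\nn}\lz_i\widehat{a_i}(x),\qquad x\in\rn.$$
By Lemma \ref{3l2}, each summand is pointwise controlled, uniformly in $i$, by
$$M(x):=\max\lf\{[\rho_\ast(x)]^{\frac1{p_-}-1},\,[\rho_\ast(x)]^{\frac1{p_+}-1}\r\}.$$
Applying Lemma \ref{3l5} to this decomposition gives $\sum_{i\in\nn}|\lz_i|\ls\|f\|_{\vh}$, so the series defining $F$ converges absolutely at every $x\in\rn$ and satisfies the pointwise bound \eqref{3e6}.

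Next I would verify that $F$ is continuous on $\rn$ and that $\widehat{f}=F$ in $\cs'(\rn)$. For continuity, each $\widehat{a_i}$ is continuous on $\rn$ (since $a_i\in L^1(\rn)$); combining $|\lz_i\widehat{a_i}(x)|\ls|\lz_i|M(x)$ with $\sum_i|\lz_i|<\fz$ and the local boundedness of $M$ yields uniform convergence of the partial sums on every compact subset of $\rn$, hence continuity of $F$. For the identification, fix $\phi\in\cs(\rn)$; the convergence $\sum_i\lz_i a_i\to f$ in $\cs'(\rn)$ gives
$$\lf\langle\widehat{f},\phi\r\rangle=\lf\langle f,\widehat{\phi}\r\rangle
=\sum_{i\in\nn}\lz_i\int_\rn\widehat{a_i}(x)\phi(x)\,dx.$$
To exchange the sum and the integral I would dominate the integrand by $M(x)|\phi(x)|\sum_i|\lz_i|$; Lemma \ref{3l3} bounds $M(x)$ by a polynomial in $|x|$, so $M|\phi|$ is integrable, and the dominated convergence theorem produces $\langle\widehat{f},\phi\rangle=\int_\rn F(x)\phi(x)\,dx$, as required.

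The central technical hurdle, already absorbed into Lemma \ref{3l2}, is the uniform pointwise estimate on atoms; that is where the vanishing moments from Definition \ref{3d3}(i)$_3$ play the decisive role (through the Taylor-remainder argument inside Lemma \ref{3l1}) and where the case distinction $\rho_\ast(x)\leq b^{-k_0}$ versus $\rho_\ast(x)>b^{-k_0}$ naturally produces the two competing exponents $\frac{1}{p_-}-1$ and $\frac{1}{p_+}-1$ appearing in \eqref{3e6}. Given this estimate, the remaining steps — absolute convergence through Lemma \ref{3l5}, continuity by local uniform convergence, and the Schwartz pairing via dominated convergence with the polynomial control of $M$ from Lemma \ref{3l3} — are routine.
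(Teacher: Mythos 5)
Your proposal does not prove the statement in question. The statement is Lemma \ref{3l4} itself, namely the atomic characterization $\vh=\vah$ with equivalent quasi-norms; but the very first step of your argument is to ``combine the atomic characterization of $\vh$ from Lemma \ref{3l4}'' with Lemma \ref{3l2} --- that is, you assume the conclusion you are asked to establish and then use it as a tool. What you go on to construct (the function $F:=\sum_{i\in\nn}\lz_i\widehat{a_i}$, its absolute convergence via Lemma \ref{3l5}, its continuity by local uniform convergence, the identification $\widehat f=F$ in $\cs'(\rn)$ by pairing against Schwartz functions, and the pointwise bound \eqref{3e6}) is, essentially step for step, the paper's proof of Theorem \ref{3t1}, not a proof of Lemma \ref{3l4}. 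In the paper, Lemma \ref{3l4} carries no proof at all: it is imported verbatim from \cite[Theorem 4.8]{lwyy18}.

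A genuine proof of Lemma \ref{3l4} would have to establish two inclusions, neither of which appears in your argument. The inclusion $\vah\subset\vh$ requires a uniform estimate of the radial (or grand) maximal function of a $(p(\cdot),r,s)$-atom, split into the part near the supporting dilated ball (handled by the $L^r$ size condition and the boundedness of the Hardy--Littlewood maximal operator on $L^{p(\cdot)/\underline{p}}(\rn)$) and the part far away (handled by the vanishing moments, which produce the decay exponent tied to the condition \eqref{3e2} on $s$). The converse inclusion $\vh\subset\vah$ is the substantial direction: it requires a Calder\'on--Zygmund-type decomposition of $f$ adapted to the level sets of the grand maximal function of order $N$ (this is where the hypothesis $N\ge\lfloor(\frac1{\underline{p}}-1)\frac{\ln b}{\ln\lambda_-}\rfloor+2$ enters), a Whitney-type covering by dilated balls, the construction of atoms by subtracting local polynomial projections of degree $s$, and a convergence argument for the resulting series in $\cs'(\rn)$ together with the quasi-norm estimate in $\lv$. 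None of this machinery is present or even sketched in your proposal, so as a proof of the stated lemma it is both circular and missing all of the required content.
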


Now, we show Theorem \ref{3t1}.

\begin{proof}[Proof of Theorem \ref{3t1}]
Let $p(\cdot)\in C^{\log}(\rn)$, $r\in(\max\{p_+,1\},\fz]$, $s$ be as in \eqref{3e2}
and $f\in\vh$. Then, from Lemma \ref{3l4} and Definition \ref{3d3}(ii), we deduce that
there exist a sequence
$\{\lz_i\}_{i\in\nn}\subset\mathbb{C}$ and a sequence of $(p(\cdot),r,s)$-atoms,
$\{a_i\}_{i\in\nn}$, supported, respectively, in
$\{B^{(i)}\}_{i\in\nn}\subset\B$ such that
\begin{align*}
f=\sum_{i\in\nn}\lz_ia_i
\quad\mathrm{in}\quad\cs'(\rn),
\end{align*}
and
\begin{align}\label{3e7}
\|f\|_{\vh}\sim\lf\|\lf\{\sum_{i\in\nn}
\lf[\frac{|\lz_i|\mathbf{1}_{B^{(i)}}}{\|\mathbf{1}_{B^{(i)}}\|_{\lv}}\r]^
{\underline{p}}\r\}^{1/\underline{p}}\r\|_{\lv}.
\end{align}
Therefore, by the continuity of Fourier transform on $\cs'(\rn)$, we know that
\begin{align}\label{3e10}
\widehat{f}=\sum_{i\in\nn}\lz_i\widehat{a_i}
\quad\mathrm{in}\quad\cs'(\rn).
\end{align}
Moreover, for any $i\in\nn$, $a_i\in L^1(\rn)$ implies that
$\widehat{a_i}\in L^\fz(\rn)$. By this,
Lemmas \ref{3l2} and \ref{3l5}, and \eqref{3e7}, we conclude that, for any $x\in\rn$,
\begin{align}\label{3e8}
\sum_{i\in\nn}|\lz_i||\widehat{a_i}(x)|
&\ls\sum_{i\in\nn}|\lz_i|
\max\lf\{\lf[\rho_*(x)\r]^{\frac1{p_-}-1},\,\lf[\rho_*(x)\r]^{\frac1{p_+}-1}\r\}\\
&\ls\|f\|_{\vh}
\max\lf\{\lf[\rho_*(x)\r]^{\frac1{p_-}-1},\,\lf[\rho_*(x)\r]^{\frac1{p_+}-1}\r\}\noz\\
&<\fz.\noz
\end{align}
Thus, for any $x\in\rn$,
\begin{align}\label{3e9}
F(x):=\sum_{i\in\nn}\lz_i\widehat{a_i}(x)
\end{align}
makes sense pointwisely and
\begin{align*}
|F(x)|\ls\|f\|_{\vh}
\max\lf\{\lf[\rho_*(x)\r]^{\frac1{p_-}-1},\,\lf[\rho_*(x)\r]^{\frac1{p_+}-1}\r\}.
\end{align*}

We next prove that the above function $F$ is continuous on $\rn$. To do this,
we only need to show that $F$ is continuous on any compact subset of $\rn$.
Note that, for any given compact subset $X$, there exists a positive constant
$C_{(A,X)}$, depending only on the dilation $A$ and $X$, such that
$\rho_*(\cdot)\leq C_{(A,X)}$ holds true absolutely on $X$. From this and
the estimate of \eqref{3e8}, it follows that, for any $x\in X$,
\begin{align*}
\sum_{i\in\nn}|\lz_i||\widehat{a_i}(x)|
&\ls\max\lf\{\lf[C_{(A,X)}\r]^{\frac1{p_-}-1},\,\lf[C_{(A,X)}\r]^{\frac1{p_+}-1}\r\}
\|f\|_{\vh},
\end{align*}
Therefore, the summation $\sum_{i\in\nn}\lz_i\widehat{a_i}(\cdot)$ converges uniformly
on $X$. This, combined with the fact that, for any $i\in\nn$, $\widehat{a_i}$ is continuous,
implies that $F$ is also continuous on any compact subset $X$ and hence on $\rn$.

Finally, to complete the proof of Theorem \ref{3t1}, by \eqref{3e10} and \eqref{3e9},
it suffices to show that
\begin{align}\label{3e11}
F=\sum_{i\in\nn}\lz_i\widehat{a_i}
\quad\mathrm{in}\quad\cs'(\rn).
\end{align}
To this end, by Lemma \ref{3l2} and the definition of Schwartz functions, we find that,
for any $\phi\in\cs(\rn)$ and $i\in\nn$,
\begin{align*}
&\lf|\int_{\rn}\widehat{a_i}(x)\phi(x)\,dx\r|\\
&\quad\leq\sum_{k=1}^\fz\int_{(A^*)^{k+1}B^*_0\setminus(A^*)^{k}B^*_0}
\max\lf\{\lf[\rho_*(x)\r]^{\frac1{p_-}-1},\,\lf[\rho_*(x)\r]^{\frac1{p_+}-1}\r\}
|\phi(x)|\,dx+\|\phi\|_{L^1(\rn)}\\
&\quad\ls\sum_{k=1}^\fz b^kb^{k(\frac1{p_-}-1)}b^{-k(\lceil\frac1{p_-}-1\rceil+2)}
+\|\phi\|_{L^1(\rn)}\\
&\quad\sim1,
\end{align*}
where $B^*_0$ is the unit dilated ball with respect to $A^*$ and,
for any $t\in\mathbb{R}$, $\lceil t\rceil$ denotes the least integer not less than $t$.
By this, Lemma \ref{3l5} and \eqref{3e7}, we further have
\begin{align*}
\lim_{K\to\fz}\sum_{i=K+1}^\fz|\lz_i|\lf|\int_{\rn}\widehat{a_i}(x)\phi(x)\,dx\r|
\ls\lim_{K\to\fz}\sum_{i=K+1}^\fz|\lz_i|=0,
\end{align*}
which implies that, for any $\phi\in\cs(\rn)$,
$$\langle F,\phi\rangle
=\lim_{K\to\fz}\lf\langle\sum_{i=1}^K\lz_i\widehat{a_i},\,\phi\r\rangle.$$
This finishes the proof of \eqref{3e11} and hence of Theorem \ref{3t1}.
\end{proof}

\begin{remark}\label{3r1}
\begin{enumerate}
\item[(i)]
When $p(\cdot)\equiv p\in(0,1]$, the Hardy space $\vh$ in Theorem \ref{3t1} coincides with
the anisotropic Hardy space $H^p_A(\rn)$ from \cite{mb03}, and the inequality \eqref{3e6}
becomes
\begin{align*}
|F(x)|\le C\|f\|_{H^{p}_A(\rn)}[\rho_\ast(x)]^{\frac1{p}-1}
\end{align*}
with $C$ as in \eqref{3e6}. In this case, Theorem \ref{3t1} is just \cite[Theorem 1]{bw13}.
\item[(ii)]
Let $f\in \vh\cap L^1({\mathbb{R}^n})$. In this case, we have $F=\widehat{f}$ and,
using the inequality \eqref{3e6} with $x=\mathbf{0}$,
we have $\widehat{f}(\mathbf{0})=0$.
This implies that the function $f\in \vh\cap L^1({\mathbb{R}^n})$
has a vanishing moment, which elucidates the necessity of the vanishing
moment of atoms in some sense.
\item[(iii)]
Very recently, in \cite[Theorem 2.4]{hcy21}, Huang et al. obtained a result similar
to Theorem \ref{3t1} with the Hardy space $\vh$ replaced by the anisotropic mixed-norm
Hardy space ${H_{\vec{a}}^{\vec{p}}(\rn)}$, where
$$\vec{a}:=(a_1,\ldots,a_n)\in [1,\fz)^n\quad
{\rm and}\quad \vec{p}:=(p_1,\ldots, p_n)\in (0,1]^n.$$
We should point out that the integrable exponent of the anisotropic mixed-norm Hardy space ${H_{\vec{a}}^{\vec{p}}(\rn)}$ is a vector $\vec{p}\in(0,1]^n$, whose associated basic
function space is the mixed-norm Lebesgue space ${L}^{\vec{p}}(\rn)$
which has different orders of integrability in different variables;
however, the integrable exponent of
the Hardy space $H^{p(\cdot)}_A(\rn)$ investigated in the present article is
a variable exponent function,
$$p(\cdot):\ \rn\to(0,1],$$
whose associated basic function space is the variable Lebesgue space $L^{p(\cdot)}(\rn)$;
Obviously, ${L}^{\vec{p}}(\rn)$ and $L^{p(\cdot)}(\rn)$ cannot cover each other,
so do \cite[Theorem 2.4]{hcy21} and Theorem \ref{3t1} of the present article.
\end{enumerate}
\end{remark}

\section{Applications\label{s4}}

As applications of Theorem \ref{3t1}, in this section, we first present a higher order
convergence of the function $F$ given in Theorem \ref{3t1} at the point $\mathbf{0}$.
Then we obtain an analogue of the Hardy--Littlewood inequality in the variable anisotropic
Hardy space setting.

We begin with the following result.

\begin{theorem}\label{4t1}
Let $p(\cdot)\in C^{\log}(\rn)$ satisfy $0<p_-\le p_+\le1$,
where $p_-,\ p_+$ are as in \eqref{2e3}. Then, for any
$f\in\vh$, there exists a continuous function $F$ on $\rn$ such that
$\widehat{f}=F$ in $\cs'(\rn)$
and
\begin{align}\label{4e1}
\lim_{|x|\to0^+}\frac{F(x)}{[\rho_*(x)]^{\frac1{p_-}{-1}}}=0,
\end{align}
where $\rho_*$ is the homogeneous quasi-norm with respect to $A^*$.
\end{theorem}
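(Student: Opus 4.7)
The plan is to leverage the continuous function and series representation already constructed in Theorem~\ref{3t1} and then sharpen the decay at the origin using Case~1 of Lemma~\ref{3l2}, which thanks to the choice of $s$ in \eqref{3e2} produces an exponent $(s+1)\frac{\ln\lz_-}{\ln b}$ strictly exceeding $\frac{1}{p_-}-1$. Set $\dz:=(s+1)\frac{\ln\lz_-}{\ln b}-(\frac{1}{p_-}-1)>0$. From the proof of Theorem~\ref{3t1} I retain the pointwise identity $F(x)=\sum_{i\in\nn}\lz_i\widehat{a_i}(x)$ arising from an atomic decomposition $f=\sum_i\lz_i a_i$ into $(p(\cdot),r,s)$-atoms $a_i$ supported in dilated balls $B^{(i)}\in\B$ of scale $k_0(i)$, together with the summability $\sum_i|\lz_i|\ls\|f\|_{\vh}$ from Lemma~\ref{3l5}.

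Given $\var>0$, I would pick $N\in\nn$ so large that $\sum_{i>N}|\lz_i|<\var$ and the $\vh$-quasi-norm of the tail $\sum_{i>N}\lz_i a_i$ is smaller than $\var$ (both possible by dominated convergence applied to the $\underline{p}$-sum defining the atomic quasi-norm), and then split $F=F_N+R_N$ with $F_N:=\sum_{i=1}^N\lz_i\widehat{a_i}$. For the head $F_N$, each $\widehat{a_i}$ is continuous with all derivatives of order at most $s$ vanishing at $\mathbf{0}$; Lemma~\ref{3l1} with $\az=\mathbf{0}$ combined with Lemma~\ref{3l3} (applied to $A^*$) yields $|\widehat{a_i}(x)|\ls C_i[\rho_*(x)]^{(s+1)\frac{\ln\lz_-}{\ln b}}$ whenever $\rho_*(x)\le b^{-k_0(i)}$, where $C_i$ depends on $a_i$. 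Hence for $\rho_*(x)$ smaller than $\min_{i\le N}b^{-k_0(i)}$ the ratio $|F_N(x)|/[\rho_*(x)]^{\frac{1}{p_-}-1}$ is dominated by $(\sum_{i=1}^N|\lz_i|C_i)[\rho_*(x)]^{\dz}$, which vanishes as $|x|\to 0^+$.

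The crux is bounding $|R_N(x)|/[\rho_*(x)]^{\frac{1}{p_-}-1}$ uniformly in $x$ close to $\mathbf{0}$. Here I would apply the technical inequality extracted from the proof of Lemma~\ref{3l2}, namely the refinement \eqref{3e12}, to the tail atoms, partitioning the tail index set according to whether $k_0(i)\le M(x):=\lfloor\log_b(1/\rho_*(x))\rfloor$ (Case~1) or $k_0(i)>M(x)$ (Case~2). On the Case~1 sub-collection one extracts the smallness factor $[\rho_*(x)]^{\dz}$ from the improved exponent; on the Case~2 sub-collection, since $M(x)\to\fz$ as $|x|\to 0^+$, the mass $\sum_{i>N,\,k_0(i)>M(x)}|\lz_i|$ tends to $0$ by dominated convergence against the convergent series $\sum_i|\lz_i|<\fz$. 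Combining these with Lemma~\ref{3l5} and Theorem~\ref{3t1} applied to the Hardy-norm-small $\sum_{i>N}\lz_i a_i$ yields $|R_N(x)|/[\rho_*(x)]^{\frac{1}{p_-}-1}\ls\var$ for $|x|$ sufficiently small, and letting $\var\to 0$ gives \eqref{4e1}.

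The main obstacle lies in this last step in the variable-exponent regime $p_-<p_+$: applying Theorem~\ref{3t1} bluntly to $R_N$ only gives $|R_N(x)|\ls\var[\rho_*(x)]^{\frac{1}{p_+}-1}$ for small $\rho_*(x)$, which after dividing by $[\rho_*(x)]^{\frac{1}{p_-}-1}$ produces the factor $[\rho_*(x)]^{\frac{1}{p_+}-\frac{1}{p_-}}$ that may blow up as $\rho_*(x)\to 0^+$. Circumventing this loss is precisely why one must partition the tail by the atomic scale $k_0(i)$ and exploit the sharp Case~1 estimate \eqref{3e12} rather than the uniform bound from Theorem~\ref{3t1}; this scale-sensitive argument constitutes the \emph{technical inequality} referenced in the statement.
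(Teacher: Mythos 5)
Your strategy is essentially the paper's: you take the pointwise series $F=\sum_{i\in\nn}\lz_i\widehat{a_i}$ from the proof of Theorem \ref{3t1}, the summability $\sum_{i\in\nn}|\lz_i|\ls\|f\|_{\vh}$ from Lemma \ref{3l5}, and the refined Case~1 estimate \eqref{3e12}, and you try to pass the single-atom decay \eqref{4e3} through the infinite sum. The paper does this by asserting that \eqref{4e3} holds uniformly in $i$ (one $\nu$ serving all atoms); you instead split off a finite head and partition the tail by the atomic scale $k_0(i)$. You have correctly located the crux---when $p_-<p_+$ there is no uniform bound for $|\widehat{a_i}(x)|/[\rho_*(x)]^{\frac1{p_-}-1}$ near the origin---but your repair does not close it. Concretely, for tail atoms with $k_0(i)>M(x)$ (your Case~2) the only available bound is the Case~2 estimate of Lemma \ref{3l2}, $|\widehat{a_i}(x)|\ls[\rho_*(x)]^{\frac1{p_+}-1}$, so their contribution to the ratio is $\ls[\rho_*(x)]^{\frac1{p_+}-\frac1{p_-}}\sum_{i>N,\,k_0(i)>M(x)}|\lz_i|$: the mass tends to $0$ as $M(x)\to\fz$ but with no rate, while the prefactor blows up like $[\rho_*(x)]^{-(\frac1{p_-}-\frac1{p_+})}$, and ``dominated convergence'' gives no control of this indeterminate product. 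The same loss reappears in your Case~1 for tail atoms with $0<k_0(i)\le M(x)$: the prefactor in \eqref{3e12} is $b^{k_0(i)[1-\frac1{p_+}+(s+1)\ln\lz_-/\ln b]}$, unbounded in $i$, and after using $b^{k_0(i)}\le[\rho_*(x)]^{-1}$ you are again left with a factor $[\rho_*(x)]^{\frac1{p_+}-\frac1{p_-}}$. Hence the step ``combining these \ldots yields $|R_N(x)|/[\rho_*(x)]^{\frac1{p_-}-1}\ls\var$'' is not justified.

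This is not merely a presentational defect, and it is also exactly the point the paper's own proof glosses over: the passage from the atomwise limit \eqref{4e3} to a single $\nu$ valid for every $i$ has no justification, and the required uniformity can genuinely fail when $p_-<p_+$. For instance, in dimension one with $A=(2)$, take $p(\cdot)$ equal to $p_-<1$ near the origin and tending (log-H\"olderly) to $p_\fz=p_+=1$ at infinity, fix $\psi$ with the required vanishing moments, compact support and $\widehat\psi(\xi_0)\neq0$, and set $a_m:=c\,b^{-k_m}\psi(A^{-k_m}\cdot)$ with $k_m:=2^m$; since $\|\mathbf{1}_{B_{k_m}}\|_{\lv}\sim b^{k_m}$ these are genuine $(p(\cdot),\fz,s)$-atoms, one can check that $f:=\sum_m m^{-2}a_m\in\vh$, and yet at the points $x_m$ with $(A^*)^{k_m}x_m=\xi_0$ one has $|\widehat{a_m}(x_m)|\sim1$, $\rho_*(x_m)\sim b^{-k_m}$, and (the $k_m$ being sparse enough that the other terms are negligible) $|F(x_m)|/[\rho_*(x_m)]^{\frac1{p_-}-1}\gtrsim m^{-2}b^{k_m(\frac1{p_-}-1)}\to\fz$. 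So no rearrangement of the estimates \eqref{3e12} and \eqref{4e3} alone can supply the missing uniform control: one either needs $p_-=p_+$ (the Bownik--Wang situation, where the ratio is uniformly bounded and the classical head/tail argument---and your head estimate---works), or extra hypotheses on $p(\cdot)$, or a weight adapted to the variable exponent in place of $[\rho_*(x)]^{\frac1{p_-}-1}$. As it stands, your proposal (like the paper's argument) leaves this decisive step open.
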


\begin{proof}
Let $p(\cdot)\in C^{\log}(\rn)$, $r\in(\max\{p_+,1\},\fz]$, $s$ be as in \eqref{3e2}
and $f\in\vh$. Then, by Lemma \ref{3l4} and Definition \ref{3d3}(ii), we know that
there exist a sequence
$\{\lz_i\}_{i\in\nn}\subset\mathbb{C}$ and a sequence of $(p(\cdot),r,s)$-atoms,
$\{a_i\}_{i\in\nn}$, supported, respectively, in
$\{B^{(i)}\}_{i\in\nn}\subset\B$ such that
\begin{align*}
f=\sum_{i\in\nn}\lz_ia_i
\quad\mathrm{in}\quad\cs'(\rn),
\end{align*}
and
\begin{align}\label{4e5}
\|f\|_{\vh}\sim\lf\|\lf\{\sum_{i\in\nn}
\lf[\frac{|\lz_i|\mathbf{1}_{B^{(i)}}}{\|\mathbf{1}_{B^{(i)}}\|_{\lv}}\r]^
{\underline{p}}\r\}^{1/\underline{p}}\r\|_{\lv}.
\end{align}
Moreover, from Theorem \ref{3t1} and its proof, we deduce that there exists a continuous
function on $\rn$, namely,
\begin{align}\label{4e2}
F=\sum_{i\in\nn}\lz_i\widehat{a_i}
\end{align}
such that $\widehat{f}=F$ in $\cs'(\rn)$.

Therefore, to complete the proof of Theorem \ref{4t1}, it suffices to show that \eqref{4e1}
holds true for the function $F$ as in \eqref{4e2}. Indeed, for any $(p(\cdot),r,s)$-atom
$a$ supported in a dilated ball $x_0+B_{k_0}$ with some $x_0\in\rn$ and $k_0\in\zz$, by
an argument similar to that used in Case 1) of the proof of Lemma \ref{3l2}, we conclude
that, for any $x\in\rn$ with $\rho_*(x)\leq b^{-k_0}$,
\begin{align*}
\lf|\widehat{a}(x)\r|
\ls\max\lf\{b^{k_0[1-\frac1{p_-}+(s+1)\frac{\ln\lambda_-}{\ln b}]},\,
b^{k_0[1-\frac1{p_+}+(s+1)\frac{\ln\lambda_-}{\ln b}]}\r\}
\lf[\rho_*(x)\r]^{(s+1)\frac{\ln\lambda_-}{\ln b}}.
\end{align*}
This, together with the fact that
$$1-\frac1{p_-}+(s+1)\frac{\ln\lambda_-}{\ln b}>0,$$
further implies that
\begin{align}\label{4e3}
\lim_{|x|\to0^+}\frac{|\widehat{a}(x)|}{[\rho_*(x)]^{\frac1{p_-}{-1}}}=0.
\end{align}
On another hand, from \eqref{4e2}, it follows that, for any $x\in\rn$,
\begin{align}\label{4e4}
\frac{|F(x)|}{[\rho_*(x)]^{\frac1{p_-}{-1}}}
\leq\sum_{i\in\nn}|\lz_i|\frac{|\widehat{a_i}(x)|}{[\rho_*(x)]^{\frac1{p_-}{-1}}}.
\end{align}
In addition, by Lemma \ref{3l5} and \eqref{4e5}, we find that
$\sum_{i\in\nn}|\lz_i|<\fz$. Thus, the equality \eqref{4e3} implies that, for any given
$\epsilon\in(0,1)$, there exists a positive constant $\nu$ such that, for any $i\in\nn$
and $x\in\rn$ with $|x|\leq\nu$,
$$\frac{|\widehat{a_i}(x)|}{[\rho_*(x)]^{\frac1{p_-}{-1}}}
<\frac\epsilon{\sum_{i\in\nn}|\lz_i|+1}.$$
By this and \eqref{4e4}, we know that, for any $x\in\rn$ with $|x|\leq\nu$,
$$\frac{|F(x)|}{[\rho_*(x)]^{\frac1{p_-}{-1}}}<\epsilon.$$
Thus,
\begin{align*}
\lim_{|x|\to0^+}\frac{F(x)}{[\rho_*(x)]^{\frac1{p_-}{-1}}}=0,
\end{align*}
which completes the proof of \eqref{4e1} and hence of Theorem \ref{4t1}.
\end{proof}

\begin{remark}
\begin{enumerate}
\item[(i)]
Similar to Remark \ref{3r1},
if $p(\cdot)\equiv p\in(0,1]$, then the Hardy space $\vh$ in Theorem \ref{4t1} coincides with
the anisotropic Hardy space $H^p_A(\rn)$ from \cite{mb03}. In this case, Theorem \ref{4t1} is just \cite[Corollary 6]{bw13}.
\item[(ii)]
By Theorem \ref{4t1} and Lemma \ref{3l3}, we find that
\begin{align}\label{1e6}
\lim_{|x|\to 0^+}\frac{F(x)}
{|x|^{\frac{\ln b}{\ln\lambda_+}(\frac 1{p_-}-1)}}=0.
\end{align}
Note that, when $p(\cdot)\equiv p\in(0,1]$ and
$A=d\,{\rm I}_{n\times n}$ for some $d\in\rr$ with $|d|\in(1,\fz)$,
here and thereafter, ${\rm I}_{n\times n}$ denotes the $n\times n$ \emph{unit matrix},
the Hardy space $\vh$ coincides with the classical Hardy space $H^p({\mathbb{R}^n})$ of
Fefferman and Stein \cite{fs72}. In this case, $\frac{\ln b}{\ln\lambda_+}=n$ and $p_-=p$,
and hence \eqref{1e6} goes back to the well-known result on $H^p({\mathbb{R}^n})$
(see \cite[p.\,128]{s93}).
\end{enumerate}
\end{remark}

As another application of Theorem \ref{3t1}, we also establish a variant of the
Hardy--Littlewood inequality in the variable anisotropic Hardy space setting
as follows.

\begin{theorem}\label{4t2}
Let $p(\cdot)\in C^{\log}(\rn)$ satisfy $0<p_-\le p_+\le1$,
where $p_-,\ p_+$ are as in \eqref{2e3}. Then, for any
$f\in\vh$, there exists a continuous function $F$ on $\rn$ such that
$\widehat{f}=F$ in $\cs'(\rn)$
and
\begin{align}\label{4e6}
\lf(\int_\rn|F(x)|^{p_+}\min\lf\{\lf[\rho_*(x)\r]^{p_+-\frac{p_+}{p_-}-1},\,
\lf[\rho_*(x)\r]^{p_+-2}\r\}\,dx\r)^{\frac1{p_+}}
\leq C\|f\|_{\vh},
\end{align}
where $\rho_*$ denotes the homogeneous quasi-norm with respect to $A^*$ and
$C$ is a positive constant depending only on $A$, $p_-$ and $p_+$.
\end{theorem}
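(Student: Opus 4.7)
The plan is to adapt the classical Hardy--Littlewood strategy via atomic decomposition, using the Fourier estimates for atoms developed in Section~\ref{s3}. Fix $f\in\vh$ and invoke Lemma~\ref{3l4} to obtain a $(p(\cdot),r,s)$-atomic decomposition $f=\sum_{i\in\nn}\lz_i a_i$ with each $a_i$ supported in $B^{(i)}=x_i+B_{k_i}\in\B$ and atomic quasi-norm comparable to $\|f\|_{\vh}$ via \eqref{3e7}. By (the proof of) Theorem~\ref{3t1}, $F(x)=\sum_{i\in\nn}\lz_i\widehat{a_i}(x)$ pointwise on $\rn$. Writing $M(x):=\min\{[\rho_*(x)]^{p_+-p_+/p_--1},[\rho_*(x)]^{p_+-2}\}$ and using the subadditivity $(\sum_i c_i)^{p_+}\le\sum_i c_i^{p_+}$ valid for $p_+\le 1$, I would start from
\begin{align*}
\int_\rn|F(x)|^{p_+}M(x)\,dx\le\sum_{i\in\nn}|\lz_i|^{p_+}\int_\rn\lf|\widehat{a_i}(x)\r|^{p_+}M(x)\,dx.
\end{align*}

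The first main step is the estimate of each atomic integral $\int_\rn|\widehat{a_i}|^{p_+}M\,dx$. I would split $\rn$ according to whether $\rho_*(x)\lessgtr b^{-k_i}$ (the natural dilation scale of $a_i$) and, within each region, according to $\rho_*(x)\lessgtr 1$ (to match the two branches of $M$). On $\{\rho_*(x)\le b^{-k_i}\}$ use the Taylor/moment estimate $|\widehat{a_i}(x)|\lesssim b^{k_i}\|\mathbf{1}_{B^{(i)}}\|_{\lv}^{-1}|(A^*)^{k_i}x|^{s+1}$ coming from Lemma~\ref{3l1} via \eqref{3e4}, converting $|(A^*)^{k_i}x|$ to a power of $\rho_*(\cdot)$ by Lemma~\ref{3l3}; on the complementary region use the trivial bound $|\widehat{a_i}(x)|\lesssim b^{k_i}\|\mathbf{1}_{B^{(i)}}\|_{\lv}^{-1}$. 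Each resulting sub-integral becomes a geometric series over the dyadic shells $\{B^*_{j+1}\setminus B^*_j\}_{j\in\zz}$, whose convergence hinges on $(s+1)\ln\lambda_-/\ln b>1/p_--1$ guaranteed by~\eqref{3e2}. A careful exponent bookkeeping, treating $k_i\ge 0$ and $k_i<0$ separately (mimicking Cases 1) and 2) in the proof of Lemma~\ref{3l2}), is expected to yield the uniform bound
\begin{align*}
\int_\rn\lf|\widehat{a_i}(x)\r|^{p_+}M(x)\,dx\lesssim\min\lf\{|B^{(i)}|,\,|B^{(i)}|^{p_+/p_-}\r\}\|\mathbf{1}_{B^{(i)}}\|_{\lv}^{-p_+}.
\end{align*}

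The main obstacle is then to combine these two displays into the desired estimate. Setting $\mu_j:=|\lz_j|/\|\mathbf{1}_{B^{(j)}}\|_{\lv}$ and $g:=\{\sum_j\mu_j^{p_-}\mathbf{1}_{B^{(j)}}\}^{1/p_-}$ (so that $\|g\|_{\lv}\sim\|f\|_{\vh}$), the problem reduces to
\begin{align*}
\sum_{i\in\nn}\mu_i^{p_+}\min\lf\{|B^{(i)}|,\,|B^{(i)}|^{p_+/p_-}\r\}\lesssim\|g\|_{\lv}^{p_+}.
\end{align*}
For this I would apply the pointwise convexity inequality $(\sum_j c_j)^{p(x)/p_-}\ge\sum_j c_j^{p(x)/p_-}$ (valid since $p(x)/p_-\ge 1$) to obtain $g(x)^{p(x)}\ge\sum_j\mu_j^{p(x)}\mathbf{1}_{B^{(j)}}(x)$, then integrate and exploit the standard Luxemburg--Nakano relation $\int g^{p(x)}\,dx\lesssim\max\{\|g\|_{\lv}^{p_-},\|g\|_{\lv}^{p_+}\}$ (after homogeneity-reducing to $\|f\|_{\vh}\sim 1$). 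Finally, splitting the sum according to whether $\mu_i\lessgtr 1$ and $|B^{(i)}|\lessgtr 1$, and invoking the standard log-H\"older bound $\|\mathbf{1}_B\|_{\lv}\sim|B|^{1/p(x_B)}$ (so that $\|\mathbf{1}_B\|_{\lv}\gtrsim\min\{|B|^{1/p_-},|B|^{1/p_+}\}$), one matches each piece of the weighted sum to the corresponding piece of $\|g\|_{\lv}^{p_+}$. Taking the $p_+$-th root then produces~\eqref{4e6}.
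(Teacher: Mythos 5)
Your overall architecture (atomic decomposition, the subadditivity $(\sum_i c_i)^{p_+}\le\sum_i c_i^{p_+}$, a per-atom weighted estimate for $\widehat{a_i}$, then a coefficient summation against the atomic quasi-norm) is the same as the paper's, and your near-region treatment via \eqref{3e4} and Lemma \ref{3l3} is essentially the paper's estimate \eqref{3e12}/\eqref{4e14}. The genuine gap is in the far region $\{x:\ \rho_*(x)>b^{-k_i}\}$, where you propose to use only the trivial bound $|\widehat{a_i}(x)|\ls\|a_i\|_{L^1(\rn)}\ls b^{k_i}\|\mathbf{1}_{B^{(i)}}\|_{\lv}^{-1}$ and sum a geometric series in the dyadic shells. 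The relevant weight exponents are $p_+-2$ (on $\rho_*\le1$) and $p_+-\frac{p_+}{p_-}-1$ (on $\rho_*>1$), and the shell series $\sum_k b^{k}\,b^{k(p_+-2)}$, resp.\ $\sum_k b^{k}\,b^{k(p_+-\frac{p_+}{p_-}-1)}$, converges only when $p_+<1$, resp.\ $p_-<1$. At the endpoint $p_+=1$ (which the theorem allows, and which contains the classical Hardy--Littlewood inequality for $H^1$) the exponent is exactly $-1$: for an atom with $k_i>0$ the shells $b^{-k_i}<\rho_*(x)\le1$ contribute a factor growing like $k_i$, so the trivial $L^\infty$ bound cannot yield the uniform per-atom estimate, and your claimed bound $\int_\rn|\widehat{a_i}|^{p_+}M\,dx\ls\min\{|B^{(i)}|,|B^{(i)}|^{p_+/p_-}\}\|\mathbf{1}_{B^{(i)}}\|_{\lv}^{-p_+}$ breaks down there. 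This is precisely why the paper fixes $r=2$ in the atomic decomposition (you left $r$ general) and estimates the far-region term ${\rm I}_2$ by the H\"older inequality together with the Plancherel theorem, using $\|\widehat{a}\|_{L^2(\rn)}=\|a\|_{L^2(\rn)}\le|B^{(i)}|^{1/2}\|\mathbf{1}_{B^{(i)}}\|_{\lv}^{-1}$; the $L^2$ information is the missing ingredient, and without it the borderline case cannot be recovered.

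Two smaller remarks. First, once one has the paper's scale-free uniform bound \eqref{4e12} (a constant $R$ independent of the atom), the coefficient step reduces to $\left(\sum_i|\lz_i|^{p_+}\right)^{1/p_+}\ls\|f\|_{\vh}$, which the paper gets cleanly in \eqref{4e10} by the reverse triangle inequality in $L^{p(\cdot)/p_+}(\rn)$ (exponent $\le1$) followed by monotonicity in $\underline{p}\le p_+$; your modular argument with $g=\{\sum_j\mu_j^{p_-}\mathbf{1}_{B^{(j)}}\}^{1/p_-}$ and the case split $\mu_i\lessgtr1$ can be pushed through, but it is more delicate than needed and, in the form stated, it is only required because your per-atom bound is not uniform. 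Second, the estimate $\|\mathbf{1}_B\|_{\lv}\sim|B|^{1/p(x_B)}$ you invoke is not quoted in this paper and is only valid with the correct large-ball modification; the paper deliberately uses only the cruder two-sided bound $\|\mathbf{1}_{B_{k_0}}\|_{\lv}^{-1}\ls\max\{b^{-k_0/p_-},b^{-k_0/p_+}\}$ as in \eqref{3e4}, which suffices once the weight is chosen as in \eqref{4e6}.
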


\begin{proof}
Let $p(\cdot)\in C^{\log}(\rn)$ with $p_+\in(0,1]$, $s$ be as in \eqref{3e2}
and $f\in\vh$. Then, from Lemma \ref{3l4} and Definition \ref{3d3}(ii), we deduce that
there exist a sequence
$\{\lz_i\}_{i\in\nn}\subset\mathbb{C}$ and a sequence of $(p(\cdot),2,s)$-atoms,
$\{a_i\}_{i\in\nn}$, supported, respectively, in
$\{B^{(i)}\}_{i\in\nn}\subset\B$ such that
\begin{align*}
f=\sum_{i\in\nn}\lz_ia_i
\quad\mathrm{in}\quad\cs'(\rn),
\end{align*}
and
\begin{align}\label{4e7}
\lf\|\lf\{\sum_{i\in\nn}
\lf[\frac{|\lz_i|\mathbf{1}_{B^{(i)}}}{\|\mathbf{1}_{B^{(i)}}\|_{\lv}}\r]^
{\underline{p}}\r\}^{1/\underline{p}}\r\|_{\lv}\leq2\|f\|_{\vh}<\fz.
\end{align}
Moreover, by Theorem \ref{3t1} and its proof, we conclude that there exists a continuous
function on $\rn$, namely,
\begin{align}\label{4e8}
F=\sum_{i\in\nn}\lz_i\widehat{a_i}
\end{align}
such that $\widehat{f}=F$ in $\cs'(\rn)$.

Therefore, to prove Theorem \ref{4t2}, it suffices to show that \eqref{4e6}
holds true for the function $F$ as in \eqref{4e8}. To this end, by the
fact that $\underline{p}\le p_+\le1$ and the well-known inequality that,
for any
$\{\alpha_i\}_{i\in{\mathbb N}}\subset\mathbb{C}$
and $t\in(0,1]$,
\begin{align}\label{4e9}
\left[\sum_{i\in{\mathbb N}}|\alpha_i|\right]^{t}
\le \sum_{i\in{\mathbb N}}|\alpha_i|^{t}
\end{align}
as well as \eqref{4e7}, we find that
\begin{align}\label{4e10}
\left(\sum_{i\in{\mathbb N}}|\lambda_i|^{p_+}\right)^{1/p_+}
&=\left(\sum_{i\in{\mathbb N}}\left\|\frac{|\lambda_i|{\mathbf 1}_{B^{(i)}}}
{\|{\mathbf 1}_{B^{(i)}}\|_{L^{p(\cdot)}(\mathbb{R}^n)}}
\right\|_{\lv}^{p_+}\right)^{1/p_+}\\
&=\left(\sum_{i\in{\mathbb N}}
\left\|\frac{|\lambda_i|^{p_+}{\mathbf 1}_{B^{(i)}}}{\|{\mathbf 1}_{B^{(i)}}\|_{\lv}^{p_+}}
\right\|_{L^{p(\cdot)/p_+}({\mathbb{R}^n})}\right)^{1/p_+}\nonumber\\
&\le \left\|\sum_{i\in{\mathbb N}}
\left[\frac{|\lambda_i|{\mathbf 1}_{B^{(i)}}}{\|{\mathbf 1}_{B^{(i)}}\|_{\lv}}
\right]^{p_+}\right\|_{L^{p(\cdot)/p_+}({\mathbb{R}^n})}^{1/p_+}\nonumber\\
&=\lf\|\lf\{\sum_{i\in{\mathbb N}}\lf[\frac{|\lambda_i|{\mathbf 1}_{B^{(i)}}}
{\|{\mathbf 1}_{B^{(i)}}\|_{\lv}}
\right]^{p_+}\right\}^{1/p_+}\right\|_{L^{p(\cdot)}(\mathbb{R}^n)}\nonumber\\
&\le\left\|\left\{\sum_{i\in{\mathbb N}}
\left[\frac{|\lambda_i|{\mathbf 1}_{B^{(i)}}}{\|{\mathbf 1}_{B^{(i)}}\|_
{\lv}}\right]^{\underline{p}}\right\}^{1/\underline{p}}
\right\|_{\lv}\nonumber\\
&\le 2\|f\|_{\vh}.\nonumber
\end{align}

On another hand, by \eqref{4e8}, the fact that $p_+\in(0,1]$, \eqref{4e9}
and the Fatou lemma, it is easy to see that
\begin{align}\label{4e11}
&\int_{{\mathbb{R}^n}}|F(x)|^{p_+}
\min\left\{\left[\rho_{*}(x)\right]^{p_+-\frac {p_+}{p_-}-1},\,
\left[\rho_{*}(x)\right]^{p_+-2}\right\}\, dx\\
&\quad\le \sum_{i\in{\mathbb N}}|\lambda_i|^{p_+}\int_{{\mathbb{R}^n}}\left[\lf|\widehat{a_i}(x)\r|
\min\left\{\left[\rho_{*}(x)\right]^{1-\frac 1{p_-}-\frac 1{p_+}},\,
\left[\rho_{*}(x)\right]^{1-\frac 2{p_+}}\right\}\right]^{p_+}\,dx.\noz
\end{align}
If we can prove the following assertion: there exists a positive
constant $R$ such that, for any $(p(\cdot),2,s)$-atom $a$,
\begin{align}\label{4e12}
\left(\int_{{\mathbb{R}^n}}\left[\lf|\widehat{a}(x)\r|
\min\left\{\left[\rho_{*}(x)\right]^{1-\frac 1{p_-}-\frac 1{p_+}},\,
\left[\rho_{*}(x)\right]^{1-\frac 2{p_+}}
\right\}\right]^{p_+}\,dx\right)^{1/p_+}\le R,
\end{align}
then, by this assertion, \eqref{4e11} and \eqref{4e10}, we have
\begin{align*}
&\lf(\int_{{\mathbb{R}^n}}|F(x)|^{p_+}
\min\left\{\left[\rho_{*}(x)\right]^{p_+-\frac {p_+}{p_-}-1},\,
\left[\rho_{*}(x)\right]^{p_+-2}\right\}\, dx\r)^{1/{p_+}}\\
&\quad\le R\left(\sum_{i\in{\mathbb N}}|\lambda_i|^{p_+}\right)^{1/{p_+}}
\ls\|f\|_{\vh}.
\end{align*}
This is the desired conclusion \eqref{4e6}.

Thus, to complete the whole proof, it remains to show the assertion
\eqref{4e12}. Indeed, for any $(p(\cdot),2,s)$-atom $a$ supported in
a dilated ball $x_0+B_{k_0}$ with some $x_0\in\rn$ and $k_0\in\zz$,
we easily know that
\begin{align}\label{4e13}
&\left(\int_{{\mathbb{R}^n}}\left[\lf|\widehat{a}(x)\r|
\min\left\{\left[\rho_{*}(x)\right]^{1-\frac 1{p_-}-\frac 1{p_+}},\,
\left[\rho_{*}(x)\right]^{1-\frac 2{p_+}}\right\}\right]^{p_+}\,dx\right)^{1/{p_+}}\\
&\quad\ls\left(\int_{(A^*)^{-k_0+1}B_0^*}\left[\lf|\widehat{a}(x)\r|
\min\left\{\left[\rho_{*}(x)\right]^{1-\frac 1{p_-}-\frac 1{p_+}},\,
\left[\rho_{*}(x)\right]^{1-\frac 2{p_+}}\right\}\right]^{p_+}\,dx\right)^{1/{p_+}}\nonumber\\
&\qquad
+\left(\int_{((A^*)^{-k_0+1}B_0^*)^{\complement}}\left[\lf|\widehat{a}(x)\r|
\min\left\{\left[\rho_{*}(x)\right]^{1-\frac 1{p_-}-\frac 1{p_+}},\,
\left[\rho_{*}(x)\right]^{1-\frac 2{p_+}}\right\}\right]^{p_+}\,dx\right)^{1/{p_+}}\nonumber\\
&\quad=:{\rm I}_1+{\rm I}_2,\nonumber
\end{align}
where $B^*_0$ is the unit dilated ball with respect to $A^*$.

Let $\varepsilon$ be a fixed positive constant such that
$$1-\frac1{p_+}+(s+1)\frac{\ln\lambda_-}{\ln b}-\varepsilon
\geq1-\frac1{p_-}+(s+1)\frac{\ln\lambda_-}{\ln b}-\varepsilon>0.$$
Then, for ${\rm I}_1$, from the estimate of \eqref{3e12},
it follows that
\begin{align}\label{4e14}
{\rm I}_1
&\ls b^{k_0[1+(s+1)\frac{\ln\lambda_-}{\ln b}]}
\max\lf\{b^{\frac{k_0}{p_-}},\,b^{\frac{k_0}{p_+}}\r\}\\
&\hs\times\left(\int_{(A^*)^{-k_0+1}B_0^*}\left[
\min\left\{\left[\rho_{*}(x)\right]^{1-\frac 1{p_-}-\frac 1{p_+}+(s+1)\frac{\ln\lambda_-}{\ln b}},\,
\left[\rho_{*}(x)\right]^{1-\frac 2{p_+}+(s+1)\frac{\ln\lambda_-}{\ln b}}\right\}\right]^{p_+}\,dx\right)^{1/{p_+}}\noz\\
&\ls b^{k_0[1+(s+1)\frac{\ln\lambda_-}{\ln b}]}
\max\lf\{b^{\frac{k_0}{p_-}},\,b^{\frac{k_0}{p_+}}\r\}\noz\\
&\hs\times\min\lf\{b^{-k_0[1-\frac1{p_-}+(s+1)\frac{\ln\lambda_-}{\ln b}-\varepsilon]},\,b^{-k_0[1-\frac1{p_+}+(s+1)\frac{\ln\lambda_-}{\ln b}-\varepsilon]}\r\}\left(\int_{(A^*)^{-k_0+1}B_0^*}
\left[\rho_{*}(x)\right]^{\varepsilon p_+-1}\,dx\right)^{1/{p_+}}\noz\\
&\sim b^{k_0\varepsilon}\left[\sum_{k=-\fz}^0b^{-k_0+k}b^{(-k_0+k)(\varepsilon p_+-1)}\r]^{1/{p_+}}\sim1.\noz
\end{align}
To deal with ${\rm I}_2$, by the H\"{o}lder inequality, the Plancherel theorem,
the fact that $0<p_-\le p_+\le1$ and the size condition of $a$,
we conclude that
\begin{align*}
{\rm I}_2
&\ls\left\{\int_{((A^*)^{-k_0+1}B_0^*)^{\complement}}
\lf|\widehat{a}(x)\r|^2\,dx\right\}^{\frac12}\\
&\hs\hs\times\left\{\int_{((A^*)^{-k_0+1}B_0^*)^{\complement}}
\left[\min\left\{\left[\rho_{*}(x)\right]^{1-\frac 1{p_-}-\frac 1{p_+}},\,
\left[\rho_{*}(x)\right]^{1-\frac 2{p_+}}\right\}\right]^{\frac{2p_+}{2-p_+}}\,dx\right\}^{\frac{2-p_+}{2p_+}}\\
&\ls\|a\|_{L^2({\mathbb{R}^n})}\left\{\sum_{k=0}^\fz b^{-k_0+k}
\left[\min\left\{b^{(-k_0+k)(1-\frac 1{p_-}-\frac 1{p_+})},\,
b^{(-k_0+k)(1-\frac 2{p_+})}\right\}\right]^{\frac{2p_+}{2-p_+}}\right\}^{\frac{2-p_+}{2p_+}}\\
&\ls\|a\|_{L^2({\mathbb{R}^n})}\left\{b^{-k_0}
\left[\min\left\{b^{-k_0(1-\frac 1{p_-}-\frac 1{p_+})},\,
b^{-k_0(1-\frac 2{p_+})}\right\}\right]^{\frac{2p_+}{2-p_+}}\right\}^{\frac{2-p_+}{2p_+}}\\
&\ls\max\left\{b^{k_0(\frac12-\frac 1{p_-})},\,
b^{k_0(\frac12-\frac 1{p_+})}\right\}
\min\left\{b^{-k_0(\frac12-\frac 1{p_-})},\,
b^{-k_0(\frac12-\frac 1{p_+})}\right\}\\
&\sim 1.
\end{align*}
This, combined with \eqref{4e13} and \eqref{4e14}, implies that
\eqref{4e12} holds true and hence finishes the proof of Theorem \ref{4t2}.
\end{proof}

\begin{remark}
Recall that the well-known Hardy--Littlewood inequality for the
classical Hardy space $H^p({\mathbb{R}^n})$ is as follows:
Let $p\in(0,1]$. Then for each $f\in H^p({\mathbb{R}^n})$, we can find a continuous
function $F$ on ${\mathbb{R}^n}$ satisfying that $\widehat{f}=F$ in $\cs'({\mathbb{R}^n})$
and
\begin{align}\label{1e7}
\left[\int_{{\mathbb{R}^n}}\left|x\right|^{n(p-2)}
\left|F(x)\right|^p\,dx\right]^{1/p}\le C\left\|f\right\|_{H^p({\mathbb{R}^n})},
\end{align}
where $C$ is a positive constant independent of $f$
and $F$ (see \cite[p.\,128]{s93}).

We point out that the inequality \eqref{4e6} in Theorem \ref{4t2} is an analogue
of the Hardy--Littlewood inequality in the present setting. Indeed,
similar to Remark \ref{3r1},
when $p(\cdot)\equiv p\in(0,1]$, the Hardy space $\vh$ in Theorem \ref{4t2} becomes
the anisotropic Hardy space $H^p_A(\rn)$ from \cite{mb03}. In this case,
$p_+=p_-=p$ and hence Theorem \ref{4t2} is just \cite[Corollary 8]{bw13}.
Moreover, if
$A=d\,{\rm I}_{n\times n}$ for some $d\in\rr$ with $|d|\in(1,\fz)$,
then the anisotropic Hardy space $H^p_A(\rn)$ (namely, the Hardy space $\vh$ with
$p(\cdot)\equiv p\in(0,1]$) coincides with the classical Hardy space $H^p({\mathbb{R}^n})$ of
Fefferman and Stein \cite{fs72}. In this case, $\rho_*(x)\sim|x|^n$ for any $x\in\rn$,
and hence the Hardy--Littlewood inequality \eqref{4e6} is just \eqref{1e7}.
\end{remark}

\bigskip

\noindent Jun Liu

\medskip

\noindent  School of Mathematics,
China University of Mining and Technology,
Xuzhou 221116, Jiangsu, People's Republic of China

\smallskip

\noindent{\it E-mail:}
\texttt{junliu@cumt.edu.cn}

\end{document}